\theoremstyle{plain}
\newtheorem{theorem}{Theorem}
\newtheorem{lemma}{Lemma}
\newtheorem{corollary}{Corollary}
\theoremstyle{definition}
\newtheorem{remark}{Remark}
\theoremstyle{plain}
\newtoks\thehProclaim
\newtheorem*{Proclaim}{\the\thehProclaim}
\theoremstyle{definition}
\newtoks{\thehRemark}
\newtheorem*{Remark}{\the\thehRemark}
\renewcommand{\leq}{\leqslant}
\renewcommand{\geq}{\geqslant}
\begin{document}

\dedicatory{To Vassili Mikhailovich Babich on occasion of his 90-th birthday}

\title[Searchlight for Boundary Inflection]
{Searchlight Asymptotics for High-Frequency Scattering by Boundary Inflection}

\author{V.P. Smyshlyaev}

\address{Department of Mathematics\\
University College London \\
Gower Street \\
London\\
WC1E 6BT\\
UK}

\email{v.smyshlyaev@ucl.ac.uk}

\author{I.V. Kamotski}

\address{Department of Mathematics\\
University College London \\
Gower Street \\
London\\
SW1E 6BT\\
UK}

\email{i.kamotski@ucl.ac.uk}

%\subjclass[2000]{Primary ;Secondary }

\keywords{diffraction, whispering gallery, boundary inflection, wave operator}

\begin{abstract}
We consider an inner problem for whispering gallery high-frequency asymptotic mode's scattering by a boundary inflection.   
The related boundary-value problem for a Schr\"{o}dinger equation on a half-line with a potential linear in both space and time 
appears fundamental for describing transitions from modal to scattered asymptotic patterns, and despite having been intensively studied over several decades remains largely unsolved. 
We prove that the solution past the inflection point has a ``searchlight'' asymptotics corresponding to a beam 
concentrated near the limit ray, and establish certain decay and smoothness properties of the related searchlight amplitude. 
We also discuss further interpretations of the above result: the existence of associated generalised wave operator, and of a  version of a unitary scattering operator connecting the modal and scattered asymptotic regimes. 
\end{abstract}

\thanks{}

%\date{\today}

\maketitle

\section{Introduction}

This work is dedicated to Prof V.M. Babich on occasion of his 90-th birthday, and  is devoted to a long-standing canonical problem of high frequency diffraction, to which area Prof Babich has made many groundbreaking contributions. 
The particular problem is an inner problem for a whispering gallery high-frequency asymptotic mode 
propagating along a concave part of a boundary and then scattering at a boundary inflection point. 
Like Airy equation and associated Airy function are fundamental for describing transition from oscillatory 
to exponentially decaying asymptotic behaviors, the boundary inflection problem leads to an arguably 
equally fundamental canonical boundary-value problem for a special partial differential equation (PDE) 
describing transition from a ``modal'' to a ``scattered'' high-frequency asymptotic behavior. 

Mathematically, the problem is for a Schr\"{o}dinger equation on a half-line with a potential linear in both space and time, which was first formulated and analysed by M.M. Popov starting from \cite{P79} in the 1970-s. The associated solutions 
have asymptotic behaviors with a discrete spectrum at one end and with a continuous spectrum at the other end, and of central 
interest is to find the map connecting the above two asymptotic regimes. The problem however lacks 
separation of variables, except in the asymptotical sense at both of the above ends. 
Nevertheless, as we essentially argue in the present work, a non-standard perturbation analysis at 
the continuous spectrum end can be performed, ultimately describing the desired map connecting 
the two asymptotic representations. 
%We also argue that the problem permits a re-formulation as a
%one-dimensional boundary integral equation, whose regularization allows its further asymptotic and
%numerical analysis.

Specifically, the problem is for a high-frequency wave process near a boundary containing a simple inflection point, as 
displayed on Fig. \ref{fig1}. 
\begin{figure}
%\resizebox{8cm}
	\centering
		\includegraphics[scale=0.55]{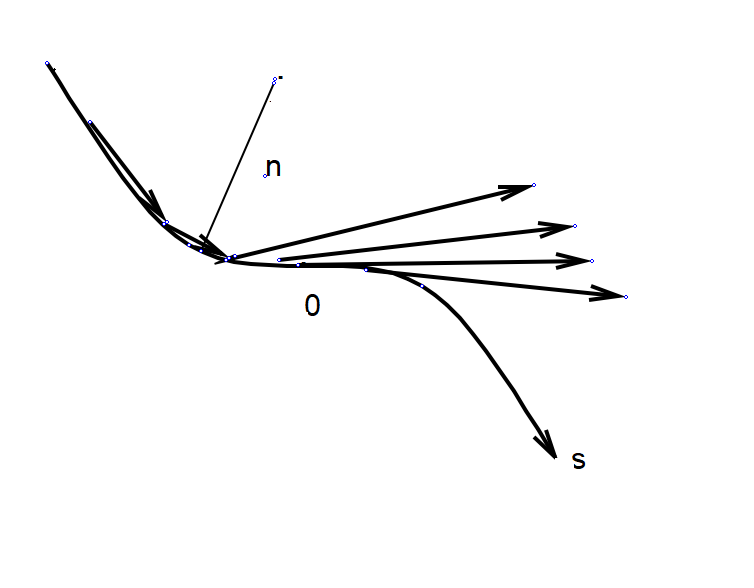}
		\caption{Whispering gallery wave near a boundary inflection } 
		\label{fig1} 
\end{figure} 
Let the wave process be described by Helmholtz equation $\Delta U+k^2U=0$ over the boundary $S$ and let $(s,n)$ be a local system 
of coordinates associated with the boundary, so that $s$ is the arclength along $S$ and $n$ is the normal distance to $S$. So the 
boundary corresponds to $n=0$, with (say) Dirichlet  boundary conditions $U(s,0)=0$. The inflection point corresponds to $s=0$ and 
so the curvature $\kappa(s)$ is positive for $s<0$ where the boundary is (locally) concave and negative for $s>0$ where the boundary 
becomes convex. 

For the concave part of the boundary, i.e. for $s<0$ away from the inflection, whispering gallery asymptotic solutions are known to exist for large $k$ (i.e for high frequencies) propagating along $S$ in a thin ``boundary-layer'' $\,n=O\left(k^{-2/3} \right)$. To the main order, these are of the form, see e.g. \cite{BB} and \cite{BK}, 
\begin{equation}
U\sim C\kappa^{1/6}(s)e^{iks}\exp\left\{-i2^{-1/3}k^{1/3}\nu_j\int_{s_0}^s\kappa^{2/3}(s')ds'\right\}
\mbox{Ai}\left(\left(2\kappa(s)\right)^{1/3}k^{2/3}n-\nu_j\right), 
\label{WGincom}
\end{equation}
where $\mbox{Ai}(z)$ is the Airy function, solution to Airy equation $\mbox{Ai}^{''}-z\mbox{Ai}=0$ exponentially decaying as $z\to +\infty$; $-\nu_j$, $j=1,2,3,...$, is any of its zeros, 
$0<\nu_1<\nu_2<...<\nu_j<\nu_{j+1}<...$, $\nu_j\to +\infty$ as $j\to\infty$; $s_0<0$ is any reference point,  
and $C$ is a constant. 

For the curvature $\kappa(s)=-\beta s+O(s^2)$ near the inflection point $s=0$, $\beta>0$, Popov has shown in \cite{P79} that the 
incoming whispering gallery wave \eqref{WGincom} has to be matched with a solution of an inner problem near $s=0$, which in 
stretched coordinates 
$x=\beta^{1/5}k^{3/5}n$, $t=\beta^{2/5}k^{1/5}s$, with $U(s,n)\sim e^{iks}\psi(x,t)$ is stated in \eqref{PopEq}--\eqref{PopAC} 
below. 

Since Popov's pioneering work the problem has attracted a sustained attention over the last several decades. Without attempting a comprehensive review, we discuss in the next section in some detail a number of  contributions particularly relevant to our 
present purposes. Additionally, Kazakov has introduced and  analysed asymptotically certain integral solutions to \eqref{PopEq} in \cite{kaz1}, and most recently proposed in \cite{kaz2} another approach for constructing a class of particular solutions to 
\eqref{PopEq} obeying boundary conditions \eqref{PopBC}. 

In the present work, we aim at rigorously establishing the structure of a ``searchlight'' formed past the inflection point, which corresponds to the asymptotics of the solution to \eqref{PopEq}--\eqref{PopAC} as $t\to +\infty$ near 
the limit ray corresponding to the straight line tangent to $S$ at $s=0$, see Fig. \ref{fig1}. Our new result is stated in Theorem \ref{thm1}, which 
justifies to the main order a related asymptotic ansatz \eqref{SLansatz} 
and establishes some properties of the searchlight amplitude $G_0(\eta)$. 

It appears that mathematically the searchlight asymptotics corresponds to a perturbation analysis as $t\to+\infty$ of  
an appropriately transformed problem. 
The underlying key transformation, first discovered in \cite{BS1}, turns out to be closely related to a ``pseudoconformal 
symmetry'' of a free Schr\"{o}dinger equation, see e.g. \cite{Talanov,Sulem,Tao}. 
The related unperturbed problem appears to be one with a continuous spectrum, corresponding to a ``free scattering'' without a boundary. This can be compared with Popov's asymptotic analysis as $t\to -\infty$ in \cite{P82}, where another transformed problem is a perturbation to an operator with a discrete spectrum corresponding to the whispering gallery modes. 
Wave operators appear to exist at both ``ends'' $t\to \pm\infty$, and hence so does a version of a scattering operator connecting 
the two asymptotic regimes, see Theorem \ref{thm2} and Corollary \ref{cor1}. 
We remark that the general idea of transforming an operator consistently with the form of the solutions' asymptotic expansion appears to resonate with that of generalised wave operators, cf \cite{Buslaev}. 
Notice also that other scenarios connecting asymptotic regimes with discrete and continuous spectra are found in the literature, see e.g. 
\cite{FS}. 

In the next section, we formulate the inner problem for $\psi(x,t)$, discuss related background and state our main result, 
Theorem \ref{thm1}. In Section \ref{sect3} we discuss interpretations for the searchlight wave operator and for a version 
of a scattering operator. 
Section \ref{proofthm1} provides a detailed proof of Theorem \ref{thm1}.

\section{Formulation, background, and the main result}

The inner problem for whispering gallery wave scattering near a simple boundary inflection point was formulated by Popov 
\cite{P79} as 
follows. In the half-plane domain $\Omega:=\{(x,t)\,;\,x>0,\, -\infty<t<+\infty\}$, find a solution $\psi(x,t)$ to the Schr\"{o}dinger-type equation
\begin{equation}
i\frac{\partial\psi}{\partial t}\,=\,-\,\frac{1}{2}\,\frac{\partial^2\psi}{\partial x^2}\,-\,xt\,\psi, 
\label{PopEq}
\end{equation}
subject to Dirichlet boundary condition at $x=0$ 
\begin{equation}
\psi(0,t)\,=\,0, 
\label{PopBC}
\end{equation}
and with a prescribed asymptotic behaviour for large negative $t$: 
\begin{equation}
\left\|\psi(\cdot,t)\,-\,\psi^-_0(\cdot,t)\right\|_{L^2(0,+\infty)}\,\to\,0, \,\, {\mbox as }\, t\to\,-\infty, 
\label{PopAC0}
\end{equation}
where 
\begin{equation}
\psi^-_0(x,t)\,=\,D_j(-2t)^{1/6}\exp\left(i\nu_j\frac{3}{20}(-2t)^{5/3}\right)\mbox{Ai}\left(x(-2t)^{1/3}-\nu_j\right). 
\label{PopAC}
\end{equation}
Here $\mbox{Ai}(z)$ is the Airy function 
and 
$D_j$ is a constant. 
Formula \eqref{PopAC} emerges in \cite{P79} as a result of matching as $t\to -\infty$ with the principal part of asymptotics of the incoming whispering gallery wave \eqref{WGincom}. 

PDE \eqref{PopEq} when regarded on the whole real line $\mathbb{R}$ ($-\infty<x<+\infty$) is known to be reducible to a separable 
one, see e.g. \cite{BPZ}. However the boundary condition \eqref{PopBC} makes it non-separable, which is a source of major 
challenges for its analysis. 

In \cite{P82} Popov studied well-posedness of \eqref{PopEq}--\eqref{PopAC}, whose solution $\psi(\cdot,t)$ was a priori understood as  
a function from the domain of a self-adjoint operator $A(t)$ in Hilbert space 
$H=L^2(0,+\infty)$ determined by the right hand side of \eqref{PopEq} 
with zero boundary condition at $x=0$. 
Function $\psi(\cdot,t)$ is required to be strongly differentiable in $t$ in $H$ with the left 
hand side of \eqref{PopEq} understood as the strong derivative. The existence (and uniqueness) was then established in \cite{P82} 
by means of wave operator methods as $t\to -\infty$. 
(For an introduction to as well as for more advanced properties of wave operators, see e.g. \cite{Kato}, \cite{Yafaev}.) 

To that end, Popov used in \cite{P82} the following natural transformation of \eqref{PopEq}--\eqref{PopBC} for identifying an 
``unperturbed'' problem as 
$t\to-\infty$, for which \eqref{PopAC} is an exact solution. %If $t<0$ were a constant, as way for eliminating it (as well as the pre-factor $1/2$) from the right 
For $t<0$, introduce new variables $(\xi,\tau)$ and function $\tilde{\psi}$ by 
\begin{equation}
\xi\,=\,(-2t)^{1/3}x, \ \ \tau\,=\,-\frac{3}{20}(-2t)^{5/3}, \ \ \ \psi(x,t)\,=\,(-2t)^{1/6}\tilde{\psi}(\xi,\tau). 
\label{PopTransf}
\end{equation}
This transforms \eqref{PopEq} into 
\begin{equation}
i\frac{\partial\tilde\psi}{\partial \tau}\,=\,-\,\, \frac{\partial^2\tilde\psi}{\partial \xi^2}\,+\,\xi\,\psi\,\,
-\,\frac{i}{5\tau}\left(\xi\frac{\partial\tilde\psi}{\partial \xi}+\frac{1}{2}\tilde\psi\right), \ \ \xi>0, \ \tau<0, 
\label{PopEq-}
\end{equation}
with similar to \eqref{PopBC} boundary condition $\tilde\psi(0,\tau)=0$. 

The asymptotics $\psi_0^-(x,t)$ in \eqref{PopAC} then transforms into 
\begin{equation}
\tilde\psi_0^-(\xi,\tau)\,=\,D_j \exp\left(-i\nu_j\tau\right)\mbox{Ai}(\xi-\nu_j), 
\label{EF-}
\end{equation} 
which is an exact solution of the 
unperturbed version of \eqref{PopEq-} as $\tau\to -\infty$ i.e. of the one with the last term on the right hand side of 
 \eqref{PopEq-} (the perturbation) 
omitted. 
Transformation \eqref{PopTransf}--\eqref{PopEq-} also helps to recover an all-order formal asymptotic expansion to the solution of  
\eqref{PopEq}--\eqref{PopAC} as $t\to -\infty$, initially constructed in \cite{P79}: 
\begin{equation}
\psi(x,t)\,\sim\,(-2t)^{1/6}e^{-i\nu_j\tau}\sum_{n=0}^\infty\tau^{-n}
\left[
P_{2n}(\xi)\mbox{Ai}\left(\xi-\nu_j\right)+Q_{2n-1}(\xi)\mbox{Ai}'\left(\xi-\nu_j\right)
\right], 
\label{PopAsFull}
\end{equation}
where $\xi$ and $\tau$ are given by \eqref{PopTransf}; 
$\mbox{Ai}'$ is the derivative of the Airy function, and $P_{2n}$ and $Q_{2n-1}$ are polynomials in $\xi$ of degrees $2n$ and $2n-1$ respectively, $Q_{2n-1}(0)=0$ ($Q_{-1}\equiv 0$). 
Indeed, substitution of \eqref{PopAsFull} into \eqref{PopEq-} using \eqref{PopTransf} and the Airy equation leads to 
straightforward recurrence relations for $P_{2n}$ and $Q_{2n-1}$. 

Popov's well-posedness analysis in \cite{P82} used in fact a further transformation of \eqref{PopEq-} via 
$\tilde\psi(\xi,\tau)=\exp\left\{-i\frac{\xi^2}{20\tau}\right\}v(\xi,\tau)$ resulting in 
\begin{equation}
i\frac{\partial v}{\partial \tau}\,=\,-\, \frac{\partial^2 v}{\partial \xi^2}\,+\,\xi\,v\,
+\,\frac{1}{25}\,\frac{\xi^2}{\tau^2}v, \ \ \tau<0; \ \ \xi>0, \ \ v(0,\tau)=0, 
\label{PopW-}
\end{equation} 
for which the existence of a wave operator was established. 
We emphasise that the transformed problem \eqref{PopW-} recasts the original problem as a perturbation, when $\tau\to -\infty$, of 
related unperturbed problem with (self-adjoint) ``Airy'' operator $A^-_0v(\xi)=-v^{''}+\xi v$, $\xi>0$, $v(0)=0$,  having a discrete spectrum and eigenfunctions 
\eqref{EF-} associated with the incoming whispering gallery modes. 

%hand side of \eqref{PopEq} would be by introducing a new variable $$. This  transforms the right-hand side of \eqref{PopEq} into $\frac{1}{2}(-2t)^{2/3}\left[-\psi_{\xi\xi}+\xi\psi\right]$ (with subscripts 
%henceforth used for partial derivatives), and the left hand side into $i\psi_t-i\frac{\xi}{3t}\psi_\xi$. 

In \cite{BS2}, V.M. Babich and the first author have proved  that the solution of \eqref{PopEq}--\eqref{PopAC} is in fact classical, using bootstrap-type techniques based on the asymptotic expansion \eqref{PopAsFull}, % as $t\to -\infty$, 
which has allowed to justify \eqref{PopAsFull} with 
proved error estimates. 
 Moreover, \cite{BS2} also proved that $\psi(x,t)$ is infinitely differentiable in $\overline{\Omega}$, and 
together with all its derivatives 
decays super-algebraically as $x\to +\infty$ uniformly at any bounded interval in $t$. 
Namely, for all non-negative 
integers $\alpha$, $\beta$ and $\gamma$, and for any real $A_1<A_2$, with $\partial$ denoting appropriate partial derivatives, 
\begin{equation}
\sup_{A_1\leq t\leq A_2}\left\vert x^\alpha\partial_t^\beta\partial_x^\gamma \psi(x,t)\right\vert\,\rightarrow\,0, \ \ \ 
  \mbox{ as } x\to +\infty. 
\label{UnifDec}
\end{equation}
Similar results and some generalisations were soon thereafter obtained by different methods in \cite{Nakam}. 
\vspace{.1in} 

The problem of fundamental interest is to determine asymptotic behaviour of the solution $\psi(x,t)$ 
of \eqref{PopEq}--\eqref{PopAC} as $t\to +\infty$, which would 
correspond to the scattered wave field past the inflection point. In \cite{BS1} a formal asymptotic solution matching 
to the incoming whispering gallery wave \eqref{PopAC}  was 
constructed in the shadow zone above the limit ray $x=t^3/6$, which curve corresponds to the straight line tangent to the 
physical boundary at the inflection point (Fig. \ref{fig1}). 
The constructed  asymptotics was observed to remain an asymptotic expansion as $t\to +\infty$ as long
as $x/t^3-1/6=:\mu \gg t^{-2}$, breaking down when $\mu\sim t^{-2}$. Following general boundary-layer 
recipes, it was suggested in \cite{BS1}, see formula (4.3) of \cite{BS1}, to seek the 
asymptotic expansion  to the solution $\psi(x,t)$ as $t\to +\infty$ in the vicinity of the limit ray, with respect to the stretched 
variable $\eta:=\mu t^2=x/t-t^2/6$, in the form
\begin{equation}
\psi(x,t)\,\sim\,t^{-1/2}\exp\left\{
i\frac{7}{120}t^5+\frac{i}{2}\eta t^3+\frac{i}{2}\eta^2 t
\right\}\,\sum_{m=0}^\infty t^{-m} G_m(\eta), \ \ 
\eta:= \frac{x}{t}-\frac{1}{6}t^2. 
\label{SLansatz}
\end{equation} 
The form \eqref{SLansatz} was interpreted in \cite{BS1} as a ``searchlight'' ansatz, for a beam anticipated to concentrate near 
the limit ray. 
The unknown functions $G_m(\eta)$ are required to have a prescribed (exponentially small) asymptotics as $\eta\to+\infty$, to match 
with the shadow asymptotics over the limit ray constructed in \cite{BS1}. 

An accompanying curious observation made in \cite{BS1} was that changing in \eqref{PopEq} the variables from $(x,t)$ to 
$(\eta,t)$ and introducing new unknown function $G(\eta,t)$ by 
\begin{equation}
\psi(x,t)\,=\,t^{-1/2}\exp\left\{
i\frac{7}{120}t^5+\frac{i}{2}\eta t^3+\frac{i}{2}\eta^2 t
\right\}\,  G(\eta,t), \ \ 
\eta= \frac{x}{t}-\frac{1}{6}t^2
\label{SLtransform}
\end{equation} 
results in the following simple  PDE for $G$, see formula (4.4) of \cite{BS1}: 
\begin{equation}
i\,t^2\frac{\partial G}{\partial t}\,+\,\frac{1}{2}\,\frac{\partial G}{\partial\eta^2}\,=\,0. 
\label{QCtransf}
\end{equation} 

Comparing \eqref{SLansatz} with \eqref{SLtransform}--\eqref{QCtransf}, one concludes that all one needs is to 
determine the main-order searchlight amplitudes $G_0(\eta)$ 
for 
\begin{equation}
G(\eta,t)\,\sim\,\sum_{m=0}^\infty t^{-m}G_m(\eta), \ \ \mbox{ as } t\to +\infty, 
\label{tayl1}
\end{equation}
 specifying all the higher-order amplitudes via 
recurrence relations 
\begin{equation}
G_{m+1}(\eta)\,=\,-\,\,\frac{i}{2m+2}{G_m^{''}(\eta)}, \ \ \ \ j=0,1,.... 
\label{tayl2}
\end{equation} 

Determination of $G_0$ however requires additional information about the unknown solution $\psi(x,t)$. 
In \cite{P86} Popov, using a Green's function representation for \eqref{PopEq}, has expressed searchlight amplitudes 
equivalent to $G_0(\eta)$ up to a change of relevant variables in terms of certain integrals 
involving still unknown solution's Neumann data 
$\partial_x\psi(0,t)=:f(t)$ at the boundary $x=0$. 
The formulas derived in \cite{P86} were implemented in \cite{PK} for computing the searchlight 
amplitudes numerically 
(where $f(t)$ was in turn computed via a finite difference method previously developed in \cite{PPsch}).

The derivation in \cite{P86} was made under an assumption 
that the surface ``flux'' $f(t)$ decays fast as $t\to +\infty$, more precisely 
\begin{equation}
\int_1^\infty |f(t)|\,\,t^p\, dt\,<\,+\infty, \ \ \ p=1,2,3,... \,\,.
\label{currentdecay}
\end{equation} 
The assumption \eqref{currentdecay}, yet unproven, appeared to be partially supported by numerical evidence, \cite{PPsch}. 
Still, it was proved thereafter by the first author in \cite{S91} that 
\begin{equation}
\int_1^\infty |f(t)|^2\,t^2\, dt\,<\,+\infty.
\label{S91est}
\end{equation} 

Some ideas of \cite{S91}, as well as those of \cite{BS1} and \cite{BS2}, serve as important ingredients for the present work. 
One of the main results of the present work is the following theorem on the searchlight amplitude $G_0$, which we 
state in a slightly more general form. 

%{\bf Theorem 1.} {\it 
\begin{theorem}\label{thm1}
Let $\psi(x,t)\in C^\infty(\overline{\Omega})$ be an infinitely smooth solution to \eqref{PopEq} satisfying boundary condition 
\eqref{PopBC} and 
decay condition \eqref{UnifDec}; in particular $\psi(x,t$ can be the solution to problem \eqref{PopEq}--\eqref{PopAC}. 

Then there exists (unique) $G_0(\eta)\in H^1(\mathbb{R})$ with $\eta G_0(\eta)\in L^2(\mathbb{R})$, and with the following 
properties. Let, for $t>0$, 
\begin{equation}
\psi^+_0(x,t)\,:=\,t^{-1/2}\exp\left\{
i\frac{7}{120}t^5+\frac{i}{2}\eta t^3+\frac{i}{2}\eta^2 t
\right\}\, G_0(\eta), \ \ \ \ \eta:=\,\frac{x}{t}-\frac{1}{6}t^2. 
\label{psi0}
\end{equation} 
Then %for the solution $\psi(x,t)$ of \eqref{PopEq}--\eqref{PopAC}, 
%, as $t\to +\infty$, 
\begin{equation}
\left\| \psi(\cdot,t)\,-\,\psi^+_0(\cdot,t)\right\|_{L^2(0,+\infty)}(t)\,\rightarrow\,0, \ \ \ 
\mbox{as }\,t\to +\infty. 
\label{psi0lim}
\end{equation} 
Moreover, for $G(\eta,t)$ introduced for $t>0$ by \eqref{SLtransform} when $\eta\geq -t^2/6$ and extended by zero for 
$\eta < -t^2/6$, as $t\to +\infty$, 
\begin{equation}
G(\eta,t)\,\rightharpoonup\,G_0(\eta)\,\, \mbox{weakly in } \,H^1(\mathbb{R}), \ \ 
\eta G(\eta,t)\,\rightharpoonup\,\eta G_0(\eta)\,\, \mbox{weakly in } \,L^2(\mathbb{R}). 
\label{gconv}
\end{equation} 
%}
\end{theorem}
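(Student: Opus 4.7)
The plan is to exploit the transformation (SLtransform)–(QCtransf) to reduce the searchlight problem to a \emph{free} Schrödinger equation on a time-dependent half-line, and then to use wave-operator techniques at $t\to+\infty$ analogous to those Popov applied at $t\to-\infty$ but around a continuous rather than a discrete spectrum. Specifically, I would first change the time variable to $\tau=-1/t$; then (QCtransf) becomes $iG_\tau+\tfrac{1}{2}G_{\eta\eta}=0$ on the moving domain $\eta>-1/(6\tau^2)$ with $G$ vanishing on the boundary. After extending $G$ by zero to all of $\mathbb{R}$, the task reduces to showing that there is a well-defined final state $G_0$ as $\tau\to 0^-$ in the senses (psi0lim)–(gconv).

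\emph{A priori bounds.} Conservation of $\|\psi(\cdot,t)\|_{L^2(0,\infty)}$, inherited from the self-adjointness framework in which Popov established well-posedness, combined with $dx=t\,d\eta$ and the $t^{-1/2}$ prefactor, yields $\|G(\cdot,t)\|_{L^2(\mathbb R)}=\text{const}$ for free. For control of $\|G_\eta\|_{L^2}$ and $\|\eta G\|_{L^2}$ the natural candidate is the \emph{pseudo-conformal charge}
\[
Q(t):=\int\bigl|(\eta-(i/t)\partial_\eta)G\bigr|^2\,d\eta=\int\Bigl(\eta^2|G|^2+\tfrac{1}{t^2}|\partial_\eta G|^2+\tfrac{2\eta}{t}\mathrm{Im}(\bar G\,\partial_\eta G)\Bigr)d\eta,
\]
which is exactly conserved for the free Schrödinger on all of $\mathbb R$. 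On our moving half-line, differentiating $Q$ in $t$ and integrating by parts produces a boundary contribution at $\eta=-t^2/6$; using $G=0$ there, one checks this term is of order $t^{-1}|G_\eta|^2$ evaluated on the boundary, which I would dominate using the known behavior of $\psi$ for $t\to-\infty$ via (PopAsFull), and then bootstrap to all $t$ by a Grönwall argument. An analogous treatment of the straight "$H^1$-energy" $\int|G_\eta|^2\,d\eta$ supplies the remaining $H^1$ bound.

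\emph{Limit extraction and identification.} Granted uniform bounds on $\|G(\cdot,t)\|_{H^1(\mathbb R)}$ and $\|\eta G(\cdot,t)\|_{L^2(\mathbb R)}$, weak compactness extracts along any sequence $t_n\to+\infty$ a limit $G_0$ with the stated regularity. To show that $G_0$ is sequence-independent and to upgrade (gconv) to the strong-$L^2$ statement (psi0lim), I would write $G(\cdot,\tau)=U_0(\tau-\tau_0)G(\cdot,\tau_0)+R(\tau,\tau_0)$, where $U_0$ is the free Schrödinger propagator on $\mathbb R$ and $R$ collects the correction due to the moving Dirichlet boundary. The remainder $R$ is controlled in $L^2$ by (Dirichlet trace of $\partial_\eta G)\times$(boundary speed), which by the above bounds tends to zero as $\tau_0\to 0^-$; hence $G(\cdot,\tau)$ is Cauchy in $L^2$ and converges to a definite $G_0$. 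The identity $\|G(\cdot,t)\|_{L^2}=\|\psi\|_{L^2}$ together with no loss of mass at $\eta\to-\infty$ (a consequence of the pseudo-conformal structure and the uniform moment bound) gives $\|G_0\|_{L^2}=\|\psi\|_{L^2}$, whence the weak-$L^2$ convergence automatically becomes strong; (psi0lim) then follows by the $L^2$-isometric inversion of (SLtransform).

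The main obstacle is the \emph{moving Dirichlet boundary} at $\eta=-t^2/6$: the pseudo-conformal and $H^1$-energy identities, effortless on all of $\mathbb R$, pick up nontrivial boundary contributions whose control requires genuinely exploiting (i) the matching at $t\to-\infty$ with the whispering gallery eigenmode, which, thanks to exponential decay of $\mathrm{Ai}$ past its turning point, sits uniformly far from the moving boundary in the transformed variables, and (ii) the uniform decay (UnifDec) to keep $\int\eta^2|G|^2\,d\eta$ bounded uniformly in $t$. Once the boundary term is tamed, the wave-operator construction and the upgrade from weak to strong $L^2$ convergence fit fairly standard patterns.
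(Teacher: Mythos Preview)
Your overall architecture---transform via \eqref{SLtransform} to a free Schr\"odinger equation on a moving half-line, derive uniform bounds on $\|G\|_{L^2}$, $\|G_\eta\|_{L^2}$, $\|\eta G\|_{L^2}$, extract a weak limit by compactness, then argue uniqueness of the limit---matches the paper's exactly. But the execution of the a priori bounds contains a genuine gap.

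You propose to handle the boundary contribution in the $H^1$-energy identity by ``dominating it using the known behavior of $\psi$ for $t\to-\infty$ via \eqref{PopAsFull}, and then bootstrapping to all $t$ by a Gr\"onwall argument.'' This will not work as stated. First, the theorem is formulated for \emph{any} smooth solution of \eqref{PopEq}--\eqref{PopBC} obeying \eqref{UnifDec}, not only the one matching the whispering-gallery mode, so \eqref{PopAsFull} is in general unavailable. Second, and more seriously, a Gr\"onwall argument on $\frac{d}{d\tau}\|G_\eta\|^2$ with a boundary term of unspecified sign is circular: you would need a priori control of $\int\tau^{-3}|G_\eta|_l^2\,d\tau$ to close it, which is precisely what you are trying to prove. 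The same objection applies to your treatment of the pseudo-conformal charge $Q$, whose boundary term you also leave uncontrolled (and note that $Q$ contains $t^{-2}\|G_\eta\|^2$, which vanishes as $t\to+\infty$, so $Q$ alone cannot give the $H^1$ bound anyway).

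The paper's key observation, which you are missing, is that the boundary term in the $H^1$-energy identity has a \emph{favorable sign}. Multiplying \eqref{QCtao2} by $\overline{G_{\eta\eta}}$ and using the tangential-derivative relation $G_\tau=-\tfrac13\tau^{-3}G_\eta$ on $l$ (from $G|_l=0$) yields the exact identity
\[
\tau^{-3}\bigl|G_\eta\bigr|^2_l \;=\; 3\,\frac{d}{d\tau}\|G_\eta\|^2(\tau),
\]
so $\|G_\eta\|^2$ is \emph{monotone increasing} in $\tau>0$. Hence $\|G_\eta\|(\tau)\leq\|G_\eta\|(\tau_1)$ for all $0<\tau<\tau_1$, and integrating the identity also gives $\int_0^{\tau_1}\tau^{-3}|G_\eta|_l^2\,d\tau<\infty$ for free---no Gr\"onwall, no appeal to $t\to-\infty$. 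With $\|G_\eta\|$ bounded, the moment $\|\eta G\|$ follows by multiplying \eqref{QCtao2} by $\eta^2\overline G$: the boundary term vanishes because $G|_l=0$, and Cauchy--Schwarz gives $\bigl|\frac{d}{d\tau}\|\eta G\|\bigr|\leq\|G_\eta\|\leq C_1$, which integrates directly.

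For the uniqueness of the limit, your Duhamel/remainder idea (comparing with the free propagator on $\mathbb{R}$ and bounding $R$ by the boundary flux) is a plausible alternative once the trace estimate above is available, though you have not spelled out the required $L^2$ bound on $R$. The paper takes a different route: it first proves (Lemma~\ref{lem3}) that for every $G_0\in C_0^\infty(\mathbb{R})$ there \emph{exists} a smooth solution with that searchlight amplitude, by building the asymptotic series \eqref{SLansatz} to high order and applying a Cook-type argument using the estimates of \cite{BS2}. Uniqueness of the limit for a general solution then follows by a density-plus-$L^2$-conservation argument (Lemma~\ref{lem4}). Either route can close, but yours still needs the sign observation above to get off the ground.
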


Theorem \ref{thm1}  provides a rigorous justification for the main-order term in the searchlight ansatz \eqref{SLansatz} 
formally constructed in \cite{BS2},  and establishes 
some properties of the amplitude $G_0(\eta)$ as well as some convergence properties as $t\to+\infty$. 

\begin{remark}
The transformation \eqref{SLtransform} leading to \eqref{QCtransf} and then to 
\eqref{tayl1} and \eqref{tayl2}, as 
discovered in \cite{BS1}, plays a key role for the purposes of the present work. 
Notice that a simple further change of variable $\tau=1/t$ 
immediately transforms \eqref{QCtransf} into a free Schr\"{o}dinger equation on $\mathbb{R}$: 
\begin{equation}
i\,\frac{\partial G}{\partial \tau}\,-\,\frac{1}{2}\,\frac{\partial^2 G}{\partial\eta^2}\,=\,0, \ \ \ \tau=t^{-1}>0. 
\label{QCtao}
\end{equation} 
This appears to be closely related to a 
``pseudoconformal symmetry'' of a free Schr\"{o}dinger equation, see e.g. \cite{Talanov,Sulem,Tao}. 
Indeed, as observed in \cite{OT12}, 
see also \cite{HOS19} p.92, 
an inner problem for the boundary inflection could be derived directly in (stretched) Cartesian 
rather than curvilinear coordinates. The former yields a potential-free Schr\"{o}dinger equation (i.e. as 
\eqref{PopEq} but with no $xt\psi$ term) but in a domain bounded by a cubic parabola. That can then be transformed 
to \eqref{PopEq} by ``straightening'' the boundary. Using that transformation ``in reverse'' in \eqref{PopEq}, i.e. introducing 
new variable $\zeta=x-\frac{1}{6}t^3$ and new function $\varphi$ by 
\begin{equation}
\psi(x,t)\,=\,\exp\left\{\frac{i}{2}\zeta t^2\,+\,i\frac{7}{120}t^5\right\}\varphi(\zeta,t)
\label{OckTransf}
\end{equation}
results in $\varphi$ solving 
\begin{equation} 
i\frac{\partial\varphi}{\partial t}+\frac{1}{2}
\frac{\partial^2\varphi}{\partial \zeta^2}\,=\,0,  
\label{pwe}
\end{equation}
which is a ``parabolic wave equation'' as in \cite{OT12}. 
Then the %pseudoconformal 
transformation of \eqref{pwe} leading to \eqref{QCtao} is: % (cf. \cite{Tao}): 
\begin{equation}
\eta\,=\,\frac{\zeta}{t}, \ \ \tau\,=\,\frac{1}{t}, \ \ \ 
\varphi(\zeta,t)\,=\,\tau^{1/2}\exp\left(i\frac{\eta^2}{2\tau}\right)G(\eta,\tau).
\label{PCtransf}
\end{equation} 
Therefore the free Schr\"{o}dinger equation is invariant under transformation \eqref{PCtransf} 
(with $\tau$ replaced by $-\tau$ or $G$ by its complex conjugate), cf. eg. \cite{Tao}.  
See also \cite{OT20} with further reference to \cite{Olv} for some further invariance properties of \eqref{pwe}. 

Combining the transformation \eqref{OckTransf} with \eqref{PCtransf} is easily seen to yield \eqref{SLtransform} with 
\eqref{QCtao}. 
These transformations imply in particular that \eqref{psi0}--\eqref{psi0lim} is held for the analogous problem 
on the whole line, $x\in \mathbb{R}$. A major technical challenge, resolved in the present work, 
is to prove that \eqref{psi0}--\eqref{psi0lim} is still held in the presence of the boundary condition \eqref{PopBC}. 
\hfill $\Box$  
\end{remark}

\begin{remark}\label{rem2}
Similarly to \eqref{PopW-}, the transformed problem \eqref{pwe} in domain $\{(\zeta,t): \zeta>-t^3/6\}$ can be viewed 
 as a perturbation, when $t\to +\infty$, of the 
unperturbed problem  on the whole of the $(\zeta,t)$-plane 
with related (self-adjoint) operator $A^+_0\varphi(\xi)=-\,\frac{1}{2}\varphi^{''}$, having a continuous spectrum associated with a 
``free scattering'' i.e. with no boundary. 
\end{remark}

A detailed proof of Theorem \ref{thm1} will be given in Section \ref{proofthm1}. 
In the next section we give some further interpretations of Theorem \ref{thm1}, in the light of Remark \ref{rem2}.

\section{Further interpretations of Theorem \ref{thm1}: searchlight wave operator and inflection scattering operator} 
\label{sect3}

In this section we give a less formal sketch, avoiding fine technical details, of how Theorem \ref{thm1} leads to further implications and 
interpretations in terms of a ``searchlight'' wave operator as $t\to+\infty$ and of an inflection scattering operator. 

Theorem \ref{thm1} applies for every incoming whispering gallery wave \eqref{PopAC}, i.e. for $j=1,2,3,...$. 
Notice that it also %as follows from the proofs in Section \ref{proofthm1}, Theorem \ref{thm1} 
holds for any classical solution of \eqref{PopEq}--\eqref{PopBC} which is infinitely 
smooth, $\psi(x,t)\in C^\infty(\overline{\Omega})$, and decays %sufficiently fast 
as $x\to +\infty$ as in \eqref{UnifDec}. 
It immediately follows from %the proofs in 
\cite{BS2}, cf. Theorem 2.1 of \cite{BS2}, that the latter properties are satisfied %, for example, 
 by 
solutions to a Cauchy problem to \eqref{PopEq}--\eqref{PopBC} with initial data from for example $C_0^\infty(0,+\infty)$ 
at $t_1=+1$:
\begin{equation}
\psi(x,t_1)\,=\,\Psi_1(x), \ \ \ \ \Psi_1\in C_0^\infty(0,+\infty). 
\label{IC}
\end{equation} 
This determines a linear mapping $U:C_0^\infty(0,+\infty)\,\to\,L^2(\mathbb{R})$, by $U\Psi_1:=G_0(\eta)$, where 
function $G_0$ is associated with the solution of \eqref{PopEq}, \eqref{PopBC}, \eqref{IC} according to Theorem \ref{thm1}. 

It is easy to see that $U$ is an $L^2$-isometry, namely 
\begin{equation}
\left\|G_0\right\|_{L^2(\mathbb{R})}\,=\,\left\|\Psi_1\right\|_{L^2(0,+\infty)}. 
\label{g0psiisom}
\end{equation} 
Indeed, denoting henceforth %for brevity 
$\mathbb{R}^+:=(0,+\infty)$, and using the fact that a solution to \eqref{PopEq}--\eqref{PopBC} 
has a $t$-independent $L^2(\mathbb{R}^+)$-norm, given $\Psi_1\in C^\infty_0(\mathbb{R}^+)$, 
\begin{equation}
\|\psi(\cdot,t)\|_{L^2(\mathbb{R}^+)}\,=\,\|\Psi_1\|_{L^2(\mathbb{R}^+)}, \ \  \ \forall t\in\mathbb{R}. 
\label{psil2inv}
\end{equation} 
On the other hand, from \eqref{psi0}, 
\begin{eqnarray}
\|\psi^+_0(\cdot,t)\|^2_{L^2(\mathbb{R}^+)}\,=\,
\int_0^{+\infty}t^{-1}\left\vert G_0\left(\frac{x}{t}-\frac{1}{6}t^2\right)\right\vert^2dx\,\,=\,
\, \ \ \ \ \ \ \ \ \ \ \ \ \ \ \ \ \ \ \ \ \ 
\nonumber \\
\, \ \ \ \ \ \ \ \ \ \ \ \ \ \ \ \ \ \ \ \ \ 
\int_{-\frac{1}{6}t^2}^{+\infty}|G_0(\eta)|^2d\eta\,\,\longrightarrow\,\,\|G_0\|^2_{L^2(\mathbb{R})}, 
\ \ \ 
\mbox{as } \,t\to +\infty. 
\label{g0l2lim}
\end{eqnarray}
Hence, via  passing to the limit as $t\to +\infty$ and using \eqref{psi0lim} and \eqref{psil2inv}, 
\[
\|G_0\|_{L^2(\mathbb{R})}=\lim_{t\to +\infty}\|\psi^+_0(\cdot,t)\|_{L^2(\mathbb{R}^+)}=
\lim_{t\to +\infty}\|\psi-(\psi-\psi^+_0)(\cdot,t)\|_{L^2(\mathbb{R}^+)}= 
\|\Psi_1\|_{L^2(\mathbb{R}^+)}, 
\]
yielding \eqref{g0psiisom}. 

In a standard way, as $C_0^\infty(\mathbb{R}^+)$ is dense in $L^2(\mathbb{R}^+)$, the above isometry is 
uniquely extended by continuity from $C_0^\infty(\mathbb{R}^+)$ to $L^2(\mathbb{R}^+)$. 
Moreover, as immediately follows from the proof of Theorem \ref{thm1}, see Lemma \ref{lem3} below, 
the range of this extension is 
actually the whole of $L^2(\mathbb{R})$. Hence the so constructed $U$ is in fact a unitary operator, 
$U: L^2(\mathbb{R}^+)\to L^2(\mathbb{R})$. 

This all can be interpreted in terms of existence of a wave operator as $t\to +\infty$ as follows. 
Fixing $t_1>0$, e.g. $t_1=+1$, consider the Cauchy problem for \eqref{PopEq}--\eqref{PopBC} with initial 
condition \eqref{IC}. Its solution $\psi(x,t)$ determines for any 
$t\in\mathbb{R}$ an $L^2(\mathbb{R}^+)$-isometric ``propagator'' operator $U(t,t_1)$, by 
$\psi(\cdot,t)=U(t,t_1)\Psi_1$. Hence $U(t,t_1)$ is uniquely extended by density to a unitary 
operator in $L^2(\mathbb{R}^+)$, whose inverse $U^{-1}(t,t_1)=U(t_1,t)$ is similarly related to solution to  
the Cauchy problem with initial data at $t$. 

A physical interpretation of the searchlight ansatz \eqref{SLansatz}--\eqref{QCtransf} is that, for large positive 
$t$, the effect of the boundary $x=0$ becomes increasingly insignificant: the wave energy tends to 
concentrate in a neighbourhood of the limit ray $x=\frac{1}{6}t^3$ away from the boundary, so 
further propagation of the wave 
beyond the inflection point i.e. for $t\to +\infty$, 
to a good approximation, is such as if there were no boundary. 
This intuition suggests that for $t>0$ one can take as an unperturbed problem 
simply the problem for PDE \eqref{PopEq} on the whole line 
$x\in \mathbb{R}$ with a similar initial condition at $t=t_1$ as \eqref{IC} but for $x\in\mathbb{R}$. 
The latter problem is 
equivalently reformulated via transformation \eqref{SLtransform} 
as a Cauchy problem for \eqref{QCtransf} for $t>0$. 

Notice that transformation \eqref{SLtransform} leading further to \eqref{QCtao} similarly 
determines, for any $\tau,\tau_1>0$, a unitary propagator $\tilde U_0(\tau,\tau_1)$ in $L^2(\mathbb{R})$ 
for \eqref{QCtao}. Importantly the point $\tau=0$, corresponding to $t\to +\infty$, remains a regular point for 
\eqref{QCtao} hence with a well-defined (strong) limit of $\tilde U_0(\tau,\tau_1)$ as $\tau\to 0+$. 
Using now the transformation \eqref{SLtransform} in reverse, determines the unitary propagator $U_0(t,t_1)$ for the 
unperturbed (i.e. with no boundary) problem for \eqref{PopEq} with the above described asymptotic behaviour as $t\to +\infty$. 

All this can be interpreted in terms of existence of a unitary analogue of wave operator 
$W_+: L^2(\mathbb{R})\to L^2(\mathbb{R}^+)$ as $t\to+\infty$, which can be defined as 
%in a standard way 
\begin{equation}
W_+\,:=\,\mbox{s-}\lim_{t\to +\infty}U^{-1}(t,t_1)\chi_+U_0(t,t_1). 
\label{w+}
\end{equation}
Here $\chi_+$ denotes restriction of a function to positive semi-axis $\mathbb{R}^+$, 
and 
$\mbox{s-}\lim$ stands for a strong limit in $L^2(\mathbb{R})$. 
Notice the role of $\chi_+$ in \eqref{w+} for approximating for large positive $t$ the solutions to the problem with boundary 
($x=0$)  
in terms of solutions of the unperturbed problem without boundary, cf e.g. \eqref{g0l2lim2} in the proof of Lemma \ref{lem3} below. 

We state the above as the following theorem. 
\begin{theorem}\label{thm2}
There exists wave operator $W_+$ defined by \eqref{w+}, which is a unitary operator from $L^2(\mathbb{R})$ into 
$L^2(\mathbb{R}^+)$. 
\end{theorem}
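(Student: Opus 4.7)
The plan is to identify $W_+$ as the composition of two naturally unitary ``searchlight maps'' — one for the boundary problem and one for its free counterpart on $\mathbb{R}$. The first is the unitary $U\colon L^2(\mathbb{R}^+)\to L^2(\mathbb{R})$ already constructed via Theorem \ref{thm1} in the paragraphs preceding Theorem \ref{thm2}. For the second, consider the Cauchy problem for \eqref{PopEq} posed on the whole line (no boundary condition): the gauge transformation \eqref{OckTransf} and the pseudoconformal transformation \eqref{PCtransf} convert it into the free Schr\"odinger equation \eqref{QCtao} on $\mathbb{R}$. Both transformations are $L^2$-isometries, and the free Schr\"odinger flow is unitary with a well-defined strong $L^2$-limit as $\tau\to 0^+$. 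This yields a unitary propagator $U_0(t,t_1)$ on $L^2(\mathbb{R})$ and a unitary ``free searchlight map'' $U^0\colon L^2(\mathbb{R})\to L^2(\mathbb{R})$ that assigns to each initial datum $\Phi$ at $t=t_1$ the corresponding free searchlight amplitude $G_0^{\mathrm{free}}(\eta)$. Define
\[
W_+\,:=\,U^{-1}\circ U^0\,\colon\,L^2(\mathbb{R})\longrightarrow L^2(\mathbb{R}^+),
\]
which is unitary as a composition of unitaries. It then remains to verify that this $W_+$ coincides with the strong limit in \eqref{w+}.

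Fix $\Phi$ in the (dense) subset of $L^2(\mathbb{R})$ for which $\Psi_1:=W_+\Phi$ lies in $C_0^\infty(\mathbb{R}^+)$ (dense, because $W_+$ is unitary and $C_0^\infty(\mathbb{R}^+)$ is dense in $L^2(\mathbb{R}^+)$), so that Theorem \ref{thm1} applies to the boundary solution generated by $\Psi_1$ via Theorem~2.1 of \cite{BS2}. Let $\phi(\cdot,t):=U_0(t,t_1)\Phi$ and $\psi(\cdot,t):=U(t,t_1)\Psi_1$. By construction $U^0\Phi=G_0=U\Psi_1$, so these two solutions share the \emph{same} searchlight amplitude $G_0$; in particular, the profile $\psi_0^+(x,t)$ given by \eqref{psi0} coincides on $\mathbb{R}^+$ with the restriction of the free profile attached to $\phi$. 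Invoking \eqref{psi0lim} for $\psi$, and the whole-line analog of \eqref{psi0lim} for $\phi$ (which, after \eqref{OckTransf}--\eqref{PCtransf}, reduces to strong continuity at $\tau=0$ of the free Schr\"odinger flow), the triangle inequality gives
\[
\bigl\|\chi_+\phi(\cdot,t)-\psi(\cdot,t)\bigr\|_{L^2(\mathbb{R}^+)}\,\leq\,\bigl\|\phi(\cdot,t)-\phi_0^+(\cdot,t)\bigr\|_{L^2(\mathbb{R})}\,+\,\bigl\|\psi_0^+(\cdot,t)-\psi(\cdot,t)\bigr\|_{L^2(\mathbb{R}^+)}\,\longrightarrow\,0
\]
as $t\to+\infty$. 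Applying the unitary $U^{-1}(t,t_1)$ on $L^2(\mathbb{R}^+)$ then yields $U^{-1}(t,t_1)\,\chi_+\,U_0(t,t_1)\Phi\to W_+\Phi$ strongly in $L^2(\mathbb{R}^+)$.

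Since every operator $U^{-1}(t,t_1)\chi_+U_0(t,t_1)$ has norm at most $1$, a standard $\varepsilon/3$ density argument extends the strong convergence to all $\Phi\in L^2(\mathbb{R})$, and the limit operator agrees with the unitary $W_+$ by continuity. \emph{The only genuine obstacle} is that Theorem \ref{thm1} requires regularity of the boundary solution (Popov/BS2-class smoothness and decay) not automatic for generic $L^2$ data; this is bypassed by choosing $\Psi_1\in C_0^\infty(\mathbb{R}^+)$, which, thanks to the unitarity of $W_+$, still parametrises a dense subspace of $L^2(\mathbb{R})$. The role of the cutoff $\chi_+$ in \eqref{w+} is then exactly to reconcile the boundary and free searchlight profiles, since by construction they differ only by restriction to $\mathbb{R}^+$.
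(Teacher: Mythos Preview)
Your argument is correct and follows the same circle of ideas that the paper sketches informally in Section~\ref{sect3} before stating Theorem~\ref{thm2}: the unitary searchlight map $U$ for the boundary problem, the free propagator $U_0(t,t_1)$ made regular at $\tau=0$ via the pseudoconformal transformation, and the role of $\chi_+$ in matching the two. The paper simply records the outcome (``We state the above as the following theorem'') without spelling out why the strong limit in \eqref{w+} actually exists; you supply that missing verification.

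The one genuinely different organizational choice you make is to \emph{define} $W_+:=U^{-1}\circ U^0$ first (manifestly unitary) and then identify it with the strong limit, rather than defining $W_+$ by \eqref{w+} and arguing unitarity afterwards. This is a cleaner ordering: it sidesteps the need to check separately that the limit operator is isometric and surjective, and it makes the density reduction transparent (since $W_+^{-1}\bigl(C_0^\infty(\mathbb{R}^+)\bigr)$ is automatically dense). The triangle-inequality step, together with the observation that the free and boundary searchlight profiles agree on $\mathbb{R}^+$, is precisely what the paper alludes to when it points to \eqref{g0l2lim2} for the role of $\chi_+$; you have simply made that explicit.
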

%The proof of Theorem \ref{thm2} is given in Section \ref{proofthm2}. 

As was shown in \cite{P82}, and in fact also follows  from \cite{BS2}, there exists also a 
``whispering gallery'' wave operator $W_-$ when $t\to -\infty$ for a reformulated Cauchy problem 
\eqref{PopW-} 
for $t<0$ 
in $L^2(\mathbb{R}^+)$. Taken together, this determines an analogue of a {\it scattering operator} for the inflection 
problem, via determining the searchlight amplitudes corresponding to each incoming whispering gallery wave 
\eqref{PopAC}, $j=1,2,...\,$. One way for such a construction is to notice that for any fixed $j$ and constant 
$D_j$ the $L^2(\mathbb{R}^+)$-norm of $\psi^-_0(x,t)$ given by \eqref{PopAC} is $t$-independent for $t<0$. 
Normalize $D_j$ in \eqref{PopAC} so that $\|\psi^-_0(\cdot,t)\|_{L^2(\mathbb{R}^+)}\equiv 1$ for all $t<0$, 
and let $G_0^{(j)}(\eta)$ be 
related searchlight amplitudes for the solution $\psi^{(j)}(x,t)$ as $t\to +\infty$ according to Theorem \ref{thm1}. 
Then, because of \eqref{PopAC0}, the $L^2(\mathbb{R}^+)$-norm $t$-independence of $\psi^{(j)}(x,t)$, and 
\eqref{g0psiisom}, $\|G_0^{(j)}\|_{L^2(\mathbb{R}^+)}=1$. 

Let $H_-=l^2$ be the standard Hilbert space of complex-valued square-summable sequences ${\bf a}=\left(a_j\right)_{j=1}^\infty$, 
\[
\left\|{\bf a}\right\|_{H_-}\,:=\,\left(\sum_{j=1}^{+\infty}|a_j|^2\right)^{1/2}. 
\]
Then it is natural to define a scattering operator as the map $S: l^2 \rightarrow L^2(\mathbb{R})$ by 
\[
S{\bf a}\,:=\,\sum_{j=1}^\infty a_j G_0^{(j)}(\eta). 
\]
All the above implies the following 
\begin{corollary}\label{cor1}
The above analogue of a scattering operator $S$ is a well-defined unitary operator from $l^2$ to $L^2(\mathbb{R})$. 
\end{corollary}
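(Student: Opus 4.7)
The plan is to recognise $S$ as the composition of two unitary maps. Specifically, $S = U\circ W_-$, where $W_-:l^2\to L^2(\mathbb{R}^+)$ is Popov's incoming whispering gallery wave operator of \cite{P82} and $U:L^2(\mathbb{R}^+)\to L^2(\mathbb{R})$ is the unitary extension of $\Psi_1\mapsto G_0$ constructed earlier in this section. This identification is verified on the canonical basis $\{e_j\}$ of $l^2$: one has $W_- e_j = \psi^{(j)}(\cdot, t_1)$, the initial data at $t=t_1$ of the Cauchy problem whose solution has incoming asymptotic \eqref{PopAC} normalised so that $\|\psi^-_0(\cdot,t)\|_{L^2(\mathbb{R}^+)}\equiv 1$, and then $U\psi^{(j)}(\cdot, t_1) = G_0^{(j)} = Se_j$. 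Once $U\circ W_-$ is known to be a bounded operator, well-definedness and $L^2(\mathbb{R})$-convergence of the defining series $S\mathbf{a}=\sum_j a_j G_0^{(j)}$ follow automatically from orthonormality of $\{G_0^{(j)}\}$, and the identification $S = U\circ W_-$ extends by continuity to all of $l^2$.

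The unitarity of $U$ is the content of Theorem \ref{thm2}, and ultimately rests on the surjectivity (dense range) statement Lemma \ref{lem3} proved in Section \ref{proofthm1}. The unitarity of $W_-$ is Popov's wave operator theorem in \cite{P82} for the reformulated problem \eqref{PopW-}: isometry follows from the $L^2(\mathbb{R}^+)$-invariance of the propagator $U(t,t_1)$ for \eqref{PopEq}--\eqref{PopBC} together with \eqref{PopAC0}, which by polarisation forces $\langle\psi^{(j)}(\cdot,t),\psi^{(k)}(\cdot,t)\rangle_{L^2(\mathbb{R}^+)}\to\delta_{jk}$ as $t\to-\infty$, and hence $=\delta_{jk}$ for all $t$; while surjectivity onto $L^2(\mathbb{R}^+)$ is the asymptotic completeness for the scattering pair arising from \eqref{PopW-} — the unperturbed operator $A_0^-$ has purely discrete spectrum with complete orthonormal Airy eigenbasis, and the perturbation in \eqref{PopW-} vanishes pointwise as $\tau\to-\infty$, placing the analysis in the standard scope of \cite{Kato,Yafaev}.

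Given unitarity of both factors, $S = U\circ W_-$ is unitary from $l^2$ onto $L^2(\mathbb{R})$, as claimed. The substantive ingredient — and the only genuine obstacle — is the surjectivity of $U$, i.e.\ Lemma \ref{lem3}; granted that, the remainder is a diagram chase along $l^2 \longrightarrow L^2(\mathbb{R}^+) \longrightarrow L^2(\mathbb{R})$. As an alternative route that sidesteps abstract scattering theory on the $t\to-\infty$ side, one can prove orthonormality of $\{G_0^{(j)}\}$ directly by polarising the identity \eqref{g0l2lim}, combining it with the $t$-independence of $\langle\psi^{(j)}(\cdot,t),\psi^{(k)}(\cdot,t)\rangle_{L^2(\mathbb{R}^+)}$ and the $L^2$-approximation \eqref{psi0lim} of Theorem \ref{thm1}, and then invoke the surjectivity of $U$ from Theorem \ref{thm2} to conclude completeness of $\{G_0^{(j)}\}$ in $L^2(\mathbb{R})$.
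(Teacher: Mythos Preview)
Your proposal is correct and follows essentially the paper's own approach: Section~\ref{sect3} is explicitly presented as an informal sketch, and Corollary~\ref{cor1} is stated as following from ``all the above'', namely the unitarity of $U:L^2(\mathbb{R}^+)\to L^2(\mathbb{R})$ combined with the existence of Popov's whispering-gallery wave operator $W_-$ at $t\to-\infty$ from \cite{P82,BS2}; your decomposition $S=U\circ W_-$ simply makes this explicit, and your alternative route via direct orthonormality of $\{G_0^{(j)}\}$ is a legitimate variant of the same ingredients. One minor correction: the unitarity of $U$ is established in the discussion \emph{preceding} Theorem~\ref{thm2} (isometry \eqref{g0psiisom} plus surjectivity via Lemma~\ref{lem3}), not in Theorem~\ref{thm2} itself, which concerns the distinct operator $W_+$.
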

The physical meaning of the scattering operator $S$ is in specifying how the intensities of the incoming whispering gallery modes at 
$t\to -\infty$ 
transform into the searchlight amplitudes as $t\to +\infty$. 

\section{Proof of Theorem \ref{thm1}}\label{proofthm1}

It is more convenient to work with an equivalently transformed via \eqref{SLtransform} problem for $G(\eta,\tau)$ solving \eqref{QCtao}, which 
we re-write as 
\begin{equation}
i\, G_\tau\,-\,\frac{1}{2}\,G_{\eta\eta}\,=\,0, \ \ \ \tau=t^{-1}>0, 
\label{QCtao2}
\end{equation} 
using henceforth subscripts as shorthands for relevant partial derivatives. 
Equation \eqref{QCtao2} holds 
in domain 
$\tilde\Omega=\left\{(\eta,\tau):\, \tau>0,\,\, \eta>-\,\frac{1}{6}\tau^{-2}\right\}$ 
with boundary condition on a curve $l$ as follows 
\begin{equation}
G(\eta,\tau)\,=\,0, \ \ \ (\eta,\tau)\in l:=\left\{(\eta,\tau):\, \eta=-\,\frac{1}{6}\tau^{-2}, \,\tau>0\right\}. 
\label{BC2}
\end{equation} 
By \cite{BS2} $G\in C^\infty(\tilde\Omega\cup l)$, and the decay property \eqref{UnifDec} immediately translates into:  
for all non-negative 
integers $\alpha$, $\beta$ and $\gamma$, and for any $0<\delta<\tau_1<+\infty$, 
\begin{equation}
\sup_{\delta\leq \tau\leq \tau_1}\left\vert \eta^\alpha\partial_\tau^\beta\partial_\eta^\gamma 
G(\eta,\tau)\right\vert\,\rightarrow\,0, 
  \mbox{as } \eta\to +\infty. 
\label{UnifDec2}
\end{equation}
For proving \eqref{gconv}, and then \eqref{psi0lim}, we aim at establishing 
the limit behaviour of the above solution $G(\eta,\tau)$ as $\tau\to +0$. 
The following lemma establishes necessary a priori estimates for $G$ for $0<\tau<\tau_1$. 
Below, all the norms $\|\cdot\|$ are the $L^2$-norms for $\eta$ above the boundary curve $l$, i.e. in 
$L^2\left(-\frac{1}{6}\tau^{-2},+\infty\right)$. 
\begin{lemma}\label{lem1}
Let a smooth function $G(\eta,\tau)$ solve \eqref{QCtao2} in $\tilde\Omega$, so that 
$G\in C^\infty(\tilde\Omega\cup l)$ and \eqref{BC2} and \eqref{UnifDec2} hold, and let $\tau_1>0$. Then there exist 
positive constants $C_0$, $C_1$ and $C_2$, such that for all $0<\tau<\tau_1$, 
\begin{enumerate}
\item[(i)] 
\begin{equation}
\|G\|(\tau)\,\equiv\,C_0\,>\,0; 
\label{l1i}
\end{equation}
\item[(ii)] 
\begin{equation}
\int_0^{\tau_1}\tau^{-3}
\left\vert G_\eta\left(-\,\frac{1}{6}\tau^{-2},\tau\right)\right\vert^2d\tau\,<\,+\infty; 
\label{l1ii}
\end{equation}
\item[(iii)] 
\begin{equation}
\left\|G_\eta(\eta,\tau)\right\|(\tau)\,\leq\,C_1; 
\label{l1iii}
\end{equation}
\item[(iv)] 
\begin{equation}
\left\|\eta G(\eta,\tau)\right\|(\tau)\,\leq\,C_2; 
\label{l1iv}
\end{equation}
\end{enumerate}
\end{lemma}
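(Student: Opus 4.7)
The plan is to derive the four estimates by computing $\tau$-derivatives of carefully chosen quadratic quantities and integrating by parts, being vigilant about the moving boundary $\eta=a(\tau):=-\tfrac{1}{6}\tau^{-2}$ (with $a'(\tau)=\tfrac{1}{3}\tau^{-3}>0$). The decay \eqref{UnifDec2} eliminates all boundary terms at $\eta\to +\infty$; the Dirichlet condition $G(a(\tau),\tau)=0$ from \eqref{BC2} kills most boundary terms at $\eta=a(\tau)$; and its differentiation along that curve yields the crucial compatibility identity $G_\tau(a(\tau),\tau)=-a'(\tau)\,G_\eta(a(\tau),\tau)$. The overall strategy is to propagate the finite a priori values at $\tau=\tau_1$, ensured by smoothness, backward in $\tau$.

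For (i), I apply Leibniz's rule to $\tfrac{d}{d\tau}\|G\|^2(\tau)$. The moving-endpoint term vanishes since $G(a,\tau)=0$; the interior contribution $2\mbox{Re}\int G_\tau\bar G\,d\eta$, upon substitution of $G_\tau=-\tfrac{i}{2}G_{\eta\eta}$ from \eqref{QCtao2} and one integration by parts, collapses to a boundary current that vanishes at both ends. Thus $\|G\|(\tau)\equiv C_0$, which is strictly positive for any nontrivial $G$.

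For (ii) and (iii), I compute $\tfrac{d}{d\tau}\|G_\eta\|^2(\tau)$. The Leibniz boundary contribution is $-a'(\tau)|G_\eta(a,\tau)|^2$; integration by parts on the interior term $2\mbox{Re}\int G_{\eta\tau}\bar G_\eta\,d\eta$, followed by substitution of \eqref{QCtao2} and use of the compatibility identity for $G_\tau$ at the boundary, produces an additional $+2a'(\tau)|G_\eta(a,\tau)|^2$. The resulting net identity
\begin{equation*}
\frac{d}{d\tau}\|G_\eta\|^2(\tau)\,=\,a'(\tau)\,|G_\eta(a(\tau),\tau)|^2\,=\,\tfrac{1}{3}\tau^{-3}\,|G_\eta(a(\tau),\tau)|^2\,\geq\,0
\end{equation*}
shows $\|G_\eta\|^2$ is monotone non-decreasing in $\tau$. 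Since $\|G_\eta\|(\tau_1)<\infty$, this gives (iii) with $C_1=\|G_\eta\|(\tau_1)$, and integrating the identity on $(0,\tau_1)$ bounds the integral in (ii) by the same $C_1^2$.

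For (iv), I compute $\tfrac{d}{d\tau}\|\eta G\|^2(\tau)$. The moving-endpoint term vanishes (since $G(a,\tau)=0$), and the interior part, after two integrations by parts and substitution of \eqref{QCtao2}, reduces to $-2\int\eta\,\mbox{Im}(\bar G\,G_\eta)\,d\eta$. Cauchy--Schwarz together with (iii) bounds its modulus by $2C_1\,\|\eta G\|(\tau)$, which is a linear-in-$\|\eta G\|$ differential inequality. Combined with the finite value $\|\eta G\|(\tau_1)<\infty$, a standard Gronwall-type argument (applied to $\sqrt{\|\eta G\|^2+1}$ to sidestep possible zeros) then yields the uniform bound (iv). The main obstacle is the careful sign bookkeeping in the identity for $\tfrac{d}{d\tau}\|G_\eta\|^2$: correctly combining the Leibniz boundary term with the IBP boundary term via the moving-boundary compatibility identity is what produces the \emph{positive} net coefficient and hence the monotonicity that drives both (ii) and (iii).
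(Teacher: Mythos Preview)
Your proof is correct and follows essentially the same route as the paper: the same multiplier identities (equivalently, differentiation of $\|G\|^2$, $\|G_\eta\|^2$, and $\|\eta G\|^2$ with moving-boundary bookkeeping and the compatibility relation $G_\tau=-a'G_\eta$ on $l$) lead to the identical key identity $\tfrac{d}{d\tau}\|G_\eta\|^2=\tfrac{1}{3}\tau^{-3}|G_\eta(a,\tau)|^2$, from which (ii)--(iii) follow exactly as in the paper. The only cosmetic differences are that the integral in (ii) is bounded by $3C_1^2$ rather than $C_1^2$, and in (iv) the paper simply divides $|2II_\tau|\le 2C_1 I$ by $2I$ and integrates to get $I(\tau)\le I(\tau_1)+C_1\tau_1$, whereas you invoke a Gronwall argument on $\sqrt{I^2+1}$; both are equivalent.
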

\begin{proof}
Fix $\tau_1>0$, e.g. $\tau_1=1$. 

{\it (i):} \eqref{l1i} is standard. Multiplying \eqref{QCtao2} by $\overline{G}$ (with the overbar denoting the 
complex conjugate) and integrating in $\eta$ from $-\tau^{-2}/6$ to $+\infty$, integration by parts using 
\eqref{BC2} and \eqref{UnifDec2} and then taking the imaginary part results in 
$\frac{d}{d\tau}\left(\|G\|^2\right)=0$, which yields \eqref{l1i}. 

{\it (ii) \& (iii):} This follows a similar proof of \eqref{S91est} in \cite{S91}. 
Multiplying \eqref{QCtao2} by $\overline{G_{\eta\eta}}$ and taking the imaginary part results in 
$\overline{G_{\eta\eta}}G_\tau+{G_{\eta\eta}}\overline{G_\tau}=0$, or equivalently 
\[
\left(\overline{G_{\eta}}G_\tau+{G_{\eta}}\overline{G_\tau}\right)_\eta\,-\,
\left(\left\vert G_\eta\right\vert^2\right)_\tau\,=\,0. 
\]
Integrate the latter in $\eta$ from $-\tau^{-2}/6$ to $+\infty$ using \eqref{UnifDec2}, and notice that 
\[
\left(\left\|G_\eta\right\|^2\right)_\tau\,=\,
\frac{d}{d\tau}\left(\int_{-\tau^{-2}/6}^{+\infty}\left|G_\eta(\eta,\tau)\right|^2d\eta\right)= 
\int_{-\tau^{-2}/6}^{+\infty}\left(\left|G_\eta \right|^2\right)_\tau d\eta 
-\frac{1}{3}\tau^{-3}\left(\left\vert G_\eta\right\vert^2\right)_l, 
\]
where $(\cdot)_l$ denotes the value of the expression in the brackets on the curve $l$ i.e. for 
$\eta=-\tau^{-2}/6$. 
As a result, 
\begin{equation}
-\,\left(\overline{G_{\eta}}G_\tau+{G_{\eta}}\overline{G_\tau}\right)_l\,-\,
\left(\left\|G_\eta\right\|^2\right)_\tau
-\frac{1}{3}\tau^{-3}\left(\left\vert G_\eta\right\vert^2\right)_l\,=\,0.  
\label{l1ii2}
\end{equation} 
Now as $(G)_l= 0$ for all $\tau$, the tangential derivative of $G$ along $l$ is zero, 
implying $G_\tau+\frac{1}{3}\tau^{-3}G_\eta=0$ on $l$ or 
$G_\tau=-\frac{1}{3}\tau^{-3}G_\eta$. 
Using the latter in the first term of \eqref{l1ii2} results in 
$\frac{1}{3}\tau^{-3}\left(\overline G_\eta G_\eta+G_\eta\overline G_\eta\right)_l-
\left(\left\|G_\eta\right\|^2\right)_\tau-\frac{1}{3}\tau^{-3}\left(\left\vert G_\eta\right\vert^2\right)_l\,=\,0$ or 
\begin{equation}
\tau^{-3} \left(\left\vert G_\eta\right\vert^2\right)_l\,=\,
{3}
\left(\left\|G_\eta\right\|^2\right)_\tau. 
\label{l1ii3}
\end{equation}
Denoting $g(\tau):=\left( G_\eta \right)_l=G_\eta(-\tau^{-2}/6,\tau)$, and integrating 
\eqref{l1ii3} from some $\tau>0$ to $\tau_1$, $\tau<\tau_1$, results in 
\begin{equation}
\int_\tau^{\tau_1}{\tau'}^{-3}|g(\tau')|^2d\tau'\,+\,
{3}\left\|G_\eta\right\|^2(\tau)\,=\,{3} \left\|G_\eta\right\|^2(\tau_1). 
\label{l1ii4}
\end{equation} 
For fixed $\tau_1>0$, the right hand side of \eqref{l1ii4} is constant, which immediately yields 
\eqref{l1iii} for $0<\tau<\tau_1$. 
For \eqref{l1ii}, it only remains to pass to the limit in \eqref{l1ii4} as 
$\tau\to +0$. 

{\it (iv):}
Multiply now  \eqref{QCtao2} by $\eta^2\overline{G}$ and take the imaginary part, giving 
\[
\eta^2\left(\overline{G}G_\tau+{G}\overline{G_\tau}\right)
-\frac{1}{2i}\eta^2\left(\overline{G}G_{\eta\eta}-{G}\overline{G_{\eta\eta}}\right)\,=\,0, 
\]
which can be equivalently rewritten as 
\begin{equation}
\left(\eta^2|G|^2\right)_\tau\,+\,
\frac{i}{2}\left[
\eta^2\left(\overline{G}G_{\eta}-{G}\overline{G_\eta}\right)\right]_\eta\,-\,
i\eta\left(\overline{G}G_{\eta}-{G}\overline{G_\eta}\right)\,=\,0. 
\label{l1iv2}
\end{equation}
Integrating \eqref{l1iv2} over $\eta$, the middle term vanished due to \eqref{BC2} and \eqref{UnifDec2}, and as 
a result
\begin{equation}
\left(\left\|\eta G\right\|^2\right)_\tau\,=\,i\int_{-\frac{1}{6}\tau^{-2}}^{+\infty}
\left(\eta\overline{G}G_{\eta}\,-\,\eta{G}\overline{G_\eta}\right)d\eta.
\label{l1iv3}
\end{equation} 
Denoting, for $0<\tau\leq \tau_1$, 
$I(\tau):=\left\|\eta G\right\|(\tau)=
\left(\int_{-\frac{1}{6}\tau^{-2}}^{+\infty}\eta^2|G(\eta,\tau)|^2d\eta\right)^{1/2}$, and applying 
Cauchy-Schwartz inequality to the right hand side of \eqref{l1iv3} results in 
$|2I I_\tau|\leq\,2I\|G_\eta\|$. As a result, via \eqref{l1iii}, $|I_\tau(\tau)|\leq\|G_\eta\|(\tau)\leq C_1$, and hence 
\[
\|\eta G\|(\tau)=I(\tau)\,\leq\, I(\tau_1)+C_1\tau_1\,=:\,C_2,
\]
yielding \eqref{l1iv}. 
\end{proof}

We regard, for $0<\tau\leq \tau_1$, $G(\cdot,\tau)$ as defined on the whole of the real line $\mathbb{R}$ by extending 
it by zero for $\eta<-\tau^{-2}/6$. 
Lemma \ref{lem1} provides boundedness of $G(\eta,\tau)$ in the $H^1(\mathbb{R})$ norm (via \eqref{l1i} and \eqref{l1iii}), 
and of $\eta G(\eta,\tau)$ in the $L^2(\mathbb{R})$ norm, via \eqref{l1iv}. 
The following well-known lemma establishes analogues of compactness results under these conditions. 
As it plays an important role for our proof of Theorem \ref{thm1} we give its short proof for the readers' convenience. 
\begin{lemma}\label{lem2}
Let, for all $0<\tau<\tau_1$, $G(\eta,\tau)\in H^1(\mathbb{R})$ with $\eta G(\eta,\tau)\in L^2(\mathbb{R})$ be 
such that that the respective $H^1$ and $L^2$ norms are bounded, i.e. \eqref{l1i}, \eqref{l1iii} and \eqref{l1iv} hold. 
Then there exists a subsequence $\tau_n\to +0$ and $G_0\in H^1(\mathbb{R})$ with $\eta G_0(\eta)\in L^2(\mathbb{R})$, such that 

\item[(i)]
\begin{equation}
G(\eta,\tau_n)\,\rightharpoonup\,G_0(\eta)\, \mbox{weakly in } \,H^1(\mathbb{R}), \ 
\eta G(\eta,\tau_n)\,\rightharpoonup\,\eta G_0(\eta)\, \mbox{weakly in } L^2(\mathbb{R}). 
\label{gconv1}
\end{equation} 

\item[(ii)] 
\begin{equation}
G(\eta,\tau_n)\,\rightarrow\,G_0(\eta)\,\, \mbox{strongly in } \,L^2(\mathbb{R}). 
\label{gconv2}
\end{equation}
\end{lemma}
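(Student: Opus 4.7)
My plan is to prove both items by a standard weak-compactness plus tightness argument, using the weight $\eta$ to control mass at infinity.

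First I would obtain the weak convergence (i). Since $H^1(\mathbb{R})$ is a separable Hilbert space and the family $\{G(\cdot,\tau)\}_{0<\tau<\tau_1}$ is bounded in $H^1(\mathbb{R})$ by \eqref{l1i} and \eqref{l1iii}, Banach--Alaoglu combined with the metrizability of the weak topology on bounded sets of a separable reflexive space yields a sequence $\tau_n\to +0$ along which $G(\cdot,\tau_n)\rightharpoonup G_0$ in $H^1(\mathbb{R})$ for some $G_0\in H^1(\mathbb{R})$. Using \eqref{l1iv} and another application of Banach--Alaoglu, I may pass to a further subsequence (still denoted $\tau_n$) so that $\eta G(\cdot,\tau_n)\rightharpoonup h$ weakly in $L^2(\mathbb{R})$ for some $h\in L^2(\mathbb{R})$. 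To identify $h=\eta G_0$, I test against an arbitrary $\varphi\in C_c^\infty(\mathbb{R})$: the weak convergence in $L^2$ gives $\int \eta G(\eta,\tau_n)\,\varphi\,d\eta\to \int h\,\varphi\,d\eta$, while the weak convergence in $H^1\subset L^2$ against $\eta\varphi\in L^2$ gives the same integral converging to $\int G_0\,\eta\varphi\,d\eta$. Hence $h=\eta G_0$ a.e., establishing \eqref{gconv1}.

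Next I would prove the strong $L^2$ convergence (ii) by a tightness-plus-local-compactness argument. For any $R>0$, the $H^1$-bound and the Rellich--Kondrachov theorem on the bounded interval $(-R,R)$ give, after possibly passing to a further subsequence, that $G(\cdot,\tau_n)\to G_0$ strongly in $L^2(-R,R)$. The tails are controlled uniformly by the weight estimate \eqref{l1iv}: for every $n$,
\begin{equation}
\int_{|\eta|>R}|G(\eta,\tau_n)|^2\,d\eta\,\leq\,\frac{1}{R^2}\int_{|\eta|>R}\eta^2|G(\eta,\tau_n)|^2\,d\eta\,\leq\,\frac{C_2^2}{R^2},
\end{equation}
and the same bound holds for $G_0$ by weak lower semicontinuity of the $L^2$-norm applied to $\eta G(\cdot,\tau_n)\rightharpoonup \eta G_0$. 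Therefore
\begin{equation}
\limsup_{n\to\infty}\|G(\cdot,\tau_n)-G_0\|_{L^2(\mathbb{R})}\,\leq\,\frac{2C_2}{R},
\end{equation}
and letting $R\to +\infty$ yields \eqref{gconv2}.

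The only subtle point is the coherence of all these successive subsequence extractions, which is handled by a standard diagonal argument: I extract the $H^1$-weakly convergent subsequence first, refine it for the weighted weak convergence, then refine on each $(-k,k)$ for $k\in\mathbb{N}$ using Rellich, and finally take the diagonal sequence, which satisfies all three convergences simultaneously. No step is expected to be the main obstacle here; the lemma is purely functional-analytic and the weight $\eta$ coming from \eqref{l1iv} is precisely what compensates the non-compactness of the embedding $H^1(\mathbb{R})\hookrightarrow L^2(\mathbb{R})$.
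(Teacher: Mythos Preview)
Your proof is correct and follows essentially the same route as the paper's: weak compactness for (i), then Rellich on bounded intervals plus the tail control via the weight $\eta$ for (ii). The paper packages (i) slightly more economically by working directly in the single Hilbert space $H=\{u\in H^1(\mathbb{R}):\eta u\in L^2(\mathbb{R})\}$ with norm $\|u\|_H^2=\|u\|_{H^1}^2+\|\eta u\|_{L^2}^2$, and note that your diagonal extraction for the Rellich step is in fact unnecessary, since once weak $H^1(\mathbb{R})$-convergence is fixed, compactness of $H^1(-R,R)\hookrightarrow L^2(-R,R)$ already gives strong $L^2(-R,R)$ convergence along the \emph{same} subsequence for every $R$.
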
  
\begin{proof}
(i) By the lemma's assumptions, for all $0<\tau<\tau_1$, $G(\cdot,\tau)$ belongs to Hilbert space $H$ of functions from $H^1(\mathbb{R})$ such that $\|u\|^2_{H}:=\|u\|^2_{H^1(\mathbb{R})}+\|\eta u\|^2_{L^2(\mathbb{R})}<\infty$, which specifies the norm in $H$. Then it is easy to see that the theorem on weak compactness of a unit ball implies the existence of $G_0\in H$ and of a subsequence $\tau_n\to 0+$, such that (43) holds.

(ii)
For a subsequence chosen in (i), %construct a further subsequence as follows. 
for any $N=1,2,...$, 
%since $\left\{G(\cdot,\tau_n)\right\}$ is bounded in $H^1(-N,N)$, 
%a further subsequence can be chosen so that 
\eqref{gconv1} and the Rellich compactness theorem ensure that 
\begin{equation}
G(\eta,\tau_n)\,\rightarrow\,G_0(\eta)\,\,\mbox{ strongly in } \,L^2(-N,N). 
\label{l2ii1}
\end{equation}
Now, denoting here for brevity $G_n(\eta):=G(\eta,\tau_n)$, 
\[
\|G_n-G_0\|_{L^2(\mathbb{R})}^2\,=\,\int_{-N}^N|G_n-G_0|^2d\eta+ \int_{\eta>N}|G_n-G_0|^2d\eta\,\,\leq 
\]
\begin{equation}
\int_{-N}^N|G_n-G_0|^2d\eta+ N^{-2}\int_{|\eta|>N}\eta^2|G_n-G_0|^2d\eta\leq 
\int_{-N}^N|G_n-G_0|^2d\eta\,+\,CN^{-2},\, \forall n, N, 
\label{l2ii2}
\end{equation} 
with a constant $C$ independent of $n$ and $N$. (With latter inequality following from \eqref{l1iv} 
and (i) .) 
Now taking limit-supremum as $n\to\infty$ of inequality \eqref{l2ii2} for every $N$ and using \eqref{l2ii1} 
results in 
$\limsup_{n\to\infty}\|G_n-G_0\|^2\leq CN^{-2}$ for all $N\in\mathbb{N}$. 
Finally, \eqref{gconv2} follows from the latter as $N$ is arbitrary. 
\end{proof} 
To complete the proof of Theorem \ref{thm1} we essentially need to show that for the actual solution 
satisfying \eqref{QCtao2}--\eqref{UnifDec2} the choice of $G_0$ according to Lemma \ref{lem2} is 
actually independent of the subsequence and therefore the convergences in \eqref{gconv1} and 
\eqref{gconv2} actually hold for the whole of $t\to +0$. 
The latter is being achieved below by adapting and refining certain technical arguments from 
\cite{BS2}, as follows. 

To this end, we first aim at showing that, given a well-behaved function $G_0(\eta)$, there exists a well-behaved 
solution to \eqref{QCtao2}--\eqref{BC2} for $\tau>0$, such that $G_0$ is its strong $L^2$-limit as 
$\tau\to +0$. To begin with, we can take as such $G_0\in C_0^\infty(\mathbb{R})$. 
The following key lemma holds: 
\begin{lemma}\label{lem3}
Let $G_0\in C_0^\infty(\mathbb{R})$. Then there exists unique $G(\eta,\tau)\in C^\infty(\tilde\Omega\cup l)$ a solution to 
\eqref{QCtao2} in $\tilde\Omega$, satisfying \eqref{BC2} and \eqref{UnifDec2}, such that 
\begin{equation}
\|G(\cdot,\tau)-G_0\|\,\to\,0 \ \ \ \mbox{ as } \tau\to +0. 
\label{lem3conv}
\end{equation} 
Equivalently, there exists a unique solution $\psi(x,t)\in C^\infty(\overline{\Omega})$ to \eqref{PopEq}, 
satisfying \eqref{PopBC}, \eqref{UnifDec}, such that \eqref{psi0lim} holds for $\psi^+_0(x,t)$ defined by \eqref{psi0}. 
\end{lemma}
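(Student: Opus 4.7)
Uniqueness is immediate from Lemma \ref{lem1}(i): given two solutions $G_1, G_2$ with the stated properties, the difference satisfies the hypotheses of that lemma, so $\|G_1 - G_2\|(\tau)$ is constant in $\tau$; since this norm tends to $0$ as $\tau\to +0$ by \eqref{lem3conv}, the constant vanishes and $G_1\equiv G_2$.

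For existence, the plan is to work in the original $(x,t)$ variables and approximate the desired $\psi$ by solutions of backward initial--boundary value problems driven from $t=T\to+\infty$ by a boundary-free Schr\"odinger evolution. First, let $G^{\mathrm{free}}(\eta,\tau)$ be the unique solution of \eqref{QCtao2} on $\mathbb{R}\times(0,+\infty)$ with $G^{\mathrm{free}}(\cdot,\tau)\to G_0$ in $L^2(\mathbb{R})$ as $\tau\to +0$, and transform it via the inverse of \eqref{SLtransform} into $\psi^{\mathrm{free}}(x,t)$ solving \eqref{PopEq} on $\mathbb{R}\times(0,+\infty)$. Because $G_0\in C_0^\infty(\mathbb{R})$, the free propagator representation
\begin{equation*}
G^{\mathrm{free}}(\eta,\tau)\,=\,(2\pi i\tau)^{-1/2}\int_{\mathbb{R}}e^{i(\eta-\eta')^2/(2\tau)}\,G_0(\eta')\,d\eta',
\end{equation*}
evaluated along the curve $\eta=-\tau^{-2}/6$, $\tau=1/t$, has phase derivative in $\eta'$ comparable to $t^3$ on $\mathrm{supp}\,G_0$; repeated integration by parts then yields the super-polynomial decay $|\psi^{\mathrm{free}}(0,t)|\leq C_N t^{-N}$ for every $N$, as $t\to+\infty$.

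Next, for each large $T>0$ let $\psi_T(x,t)$ be the classical solution of \eqref{PopEq}--\eqref{PopBC} on $\Omega$ with initial data $\psi_T(x,T)=\psi^{\mathrm{free}}(x,T)$ for $x>0$; well-posedness and the unitarity of the corresponding $L^2(\mathbb{R}^+)$-evolution follow from the framework of \cite{P82,BS2}. For $t_0<T_1<T_2$, the isometry gives
\begin{equation*}
\|\psi_{T_2}(\cdot,t_0)-\psi_{T_1}(\cdot,t_0)\|_{L^2(\mathbb{R}^+)}\,=\,\|\psi_{T_2}(\cdot,T_1)-\psi^{\mathrm{free}}(\cdot,T_1)\|_{L^2(\mathbb{R}^+)},
\end{equation*}
and $\chi:=\psi_{T_2}-\psi^{\mathrm{free}}$ on $x>0$ solves \eqref{PopEq} with $\chi(0,t)=-\psi^{\mathrm{free}}(0,t)$ and $\chi(\cdot,T_2)\equiv 0$. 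A standard energy identity yields $\tfrac{d}{dt}\|\chi\|^2=\mathrm{Im}\bigl[\psi^{\mathrm{free}}(0,t)\,\overline{\chi_x(0,t)}\bigr]$; combining this with a flux-weighted estimate for $\chi_x(0,t)$ in the spirit of Lemma \ref{lem1}(ii) and the super-polynomial decay of $\psi^{\mathrm{free}}(0,t)$ gives the Cauchy criterion uniformly in $T_1,T_2$. Passing to the limit defines $\psi(\cdot,t_0):=\lim_{T\to+\infty}\psi_T(\cdot,t_0)$ in $L^2(\mathbb{R}^+)$, with \eqref{psi0lim} built in by construction, while the smoothness $\psi\in C^\infty(\overline{\Omega})$ and the uniform decay \eqref{UnifDec} follow from the bootstrap and regularity techniques of \cite{BS2} applied to this $\psi$.

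The main obstacle is the Cauchy estimate above, where one must control the Neumann trace $\chi_x(0,t)$ uniformly in $T_2$ in the presence of inhomogeneous Dirichlet data. The crucial point is that $\psi^{\mathrm{free}}(0,t)$ decays faster than any polynomial, so that when paired against an at-most-polynomial growth estimate for $\chi_x(0,t)$ derived from a flux identity analogous to \eqref{l1ii3}, the resulting time integral converges and the Cauchy property ensues.
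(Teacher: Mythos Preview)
Your approach differs from the paper's in a structurally interesting way: both construct $\psi$ as a limit of half-line solutions with data imposed at $t=T\to+\infty$, but the paper takes as data the truncated asymptotic expansion $\psi^{(N)}$ from \eqref{SLansatz2}, which has compact $\eta$-support and hence vanishes identically near $x=0$ (exact boundary condition, approximate PDE with residual $L\psi^{(N)}=O(t^{-2-N})$), whereas you use the exact free evolution $\psi^{\mathrm{free}}$ (exact PDE on the whole line, boundary condition violated by a super-polynomially small amount). The paper's Cook-type estimate \eqref{cook} then needs only $L^2$-conservation and the smallness of $\|L\psi^{(N)}\|$; no boundary traces enter at all.

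Your route has two genuine gaps. First, the initial data $\psi_T(\cdot,T)=\psi^{\mathrm{free}}(\cdot,T)\big|_{x>0}$ does not vanish at $x=0$, so it lies outside the domain of the half-line operator and the smooth well-posedness from \cite{BS2} that you invoke for $\psi_T$ does not apply directly; the corner incompatibility, though small, must be handled. Second, and more critically, the flux identity you appeal to has the wrong orientation for your $\chi$. Redoing the computation leading to \eqref{l1ii3} with inhomogeneous data $(\tilde\chi)_l=\tilde h$ gives
\[
\bigl(\|\tilde\chi_\eta\|^2\bigr)_\tau\,=\,\tfrac{1}{3}\tau^{-3}\bigl(|\tilde\chi_\eta|^2\bigr)_l\,-\,2\,\mathrm{Re}\bigl((\overline{\tilde\chi_\eta})_l\,\tilde h'\bigr),
\]
and since your data sit at the \emph{small}-$\tau$ end ($\tau_2=1/T_2$, where $\tilde\chi\equiv 0$) and you integrate forward to $\tau_1>\tau_2$, the flux integral $\int\tau^{-3}(|\tilde\chi_\eta|^2)_l\,d\tau$ lands on the right-hand side with a positive sign. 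You therefore cannot bound the Neumann trace $(\tilde\chi_\eta)_l$ without already controlling $\|\tilde\chi_\eta\|(\tau_1)$, which is itself unknown; the estimate does not close as stated. The paper's choice of compactly supported $\psi^{(N)}$ sidesteps both issues at once, which is precisely the point of working with the asymptotic expansion rather than the exact free flow.
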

\begin{proof}
The proof follows the ideas from Lemmas 3.7--3.9 and Theorem 3.1 of \cite{BS2}, readjusting the argument as necessary from 
the asymptotic condition \eqref{PopAC0} for $t\to -\infty$ to \eqref{psi0lim} as $t\to +\infty$. 
In this respect, it is more convenient to deal with the latter statement of the lemma, with the stated equivalence 
following via the transformation \eqref{SLtransform}, \eqref{QCtao}. 
Indeed, let $G_0(\eta)\in C_0^\infty(\mathbb{R})$ with associated $G(\eta,\tau)$, and related $\psi_0^+(x,t)$ and $\psi(x,t)$ 
determined by respectively \eqref{psi0} and \eqref{SLtransform} (with $\tau=t^{-1}$ in the latter). 
Then, for the left hand side of \eqref{psi0lim}, for large enough $t$, 
\begin{eqnarray}
\|\psi(x,t)-\psi^+_0(x,t)\|^2_{L^2(\mathbb{R}^+)}\,\,=\,\, \ \ \ \ \ \ \ \ \ \ \ \ \ \ \ \ \ \ \ 
\nonumber\\
\int_0^{+\infty}t^{-1}\left\vert G\left(\frac{x}{t}-\frac{1}{6}t^2,t\right)-
G_0\left(\frac{x}{t}-\frac{1}{6}t^2\right)\right\vert^2dx\,= 
\nonumber \\
\int_{-\frac{1}{6}\tau^{-2}}^{+\infty}\left|G(\eta,\tau)-G_0(\eta)\right|^2d\eta\,=\,
\int_{-\infty}^{+\infty}\left|G(\eta,\tau)-G_0(\eta)\right|^2d\eta\,=
\nonumber \\ 
\,\,\,\, \ \  \ \ \ \ \ \ \ \ 
\left\|G(\cdot,\tau)\,-\,G_0\right\|^2_{L^2(\mathbb{R})}. 
\label{g0l2lim2}
\end{eqnarray}
(Since the support of $G_0(\eta)$ is finite and $G(\eta,\tau)$ is defined to vanish for $\eta<-\frac{1}{6}\tau^{-2}$, 
for small enough $\tau$ the second integral in \eqref{g0l2lim2} could be replaced by that over the whole of the real line.) 
Thus \eqref{g0l2lim2} establishes the stated equivalence.

1. Let now $G_0\in C_0^\infty(\mathbb{R})$ have a support in $[-\Lambda,\Lambda]$ for some $\Lambda>0$. Choose some large 
enough $N\in \mathbb{N}$ and define an asymptotic solution to \eqref{PopEq} as $t\to+\infty$, cf \eqref{SLansatz}, 
\begin{equation}
\psi^{(N)}(x,t)\,:=\,t^{-1/2}\exp\left\{
i\frac{7}{120}t^5+\frac{i}{2}\eta t^3+\frac{i}{2}\eta^2 t
\right\}\,\sum_{m=0}^N t^{-m} G_m(\eta), \ \ 
\eta= \frac{x}{t}-\frac{1}{6}t^2,  
\label{SLansatz2}
\end{equation} 
where $G_m$, $m=1,2,...,N$, are determined from $G_0$ by \eqref{tayl2}. 
It then follows that $\psi^{(N)}(x,t)\in C^\infty (\overline\Omega_*)$ 
and the support of $\psi^{(N)}$ in $\Omega_*$ is separated from the boundary $x=0$ of $\Omega$, where 
$\Omega_*:=\{(x,t):\,x>0, \ t>(6\Lambda)^{1/2}+1=:t_*\}$. 
In particular, 
as $\psi^{(N)}(x,t)$ vanishes for $\frac{x}{t}-\frac{1}{6}t^2<-\Lambda$, i.e. for 
$x<\frac{1}{6}t(t^2-6\Lambda)$, 
$\psi^{(N)}$ satisfies the boundary condition \eqref{PopBC} in $\Omega_*$. 
Moreover, 
as $\psi^{(N)}$ also vanishes for  $\frac{x}{t}-\frac{1}{6}t^2>\Lambda$, 
the decay condition \eqref{UnifDec} is trivially satisfied by $\psi^{(N)}$ for any $t_*<A_1<A_2<+\infty$. 

We aim at constructing solutions to \eqref{PopEq}--\eqref{PopBC} with Cauchy data given by \eqref{SLansatz2} 
at large enough $t=T$ and ultimately showing that these solutions converge to the sought solution 
$\psi(x,t)$ as $T\to+\infty$. 
So, following p.1570 of \cite{BS2}, for all $\xi>t_*$ we define $\psi^{[N]}(x,t;\xi)$ as solution to the  
Cauchy problem for \eqref{PopEq}--\eqref{PopBC} in $\Omega_*$ with initial data at $t=\xi$: 
\begin{equation}
L\psi^{[N]}:=\,i\psi^{[N]}_t\,+\,\frac{1}{2}\psi^{[N]}_{xx}\,+\,xt\,\psi^{[N]}\,=\,0 \ \ \ \mbox{in }\,\Omega_*, 
\label{psiNeq}
\end{equation} 
\begin{equation}
\psi^{[N]}(0,t;\xi)\,=\,0, \ \ \ 
\psi^{[N]}(x,\xi;\xi)\,=\,\psi^{(N)}(x,\xi). 
\label{psiNCP}
\end{equation}
By Theorem 2.1 of \cite{BS2} and its proof, for any $\xi>t_*$, there exists such a unique function 
$\psi^{[N]}(x,t;\xi)\in C^\infty(\overline{\Omega_*})$ satisfying \eqref{UnifDec} for any $t_*< A_1<A_2<+\infty$.

2. Following \cite{BS2} further, we next argue that for any $(x,t)\in\Omega_*$ and $\xi>t_*$, $\psi^{[N]}(x,t;\xi)$ is 
continuously differentiable in $\xi$, 
\begin{equation}
\frac{\partial\psi^{[N]}(x,t;\xi)}{\partial\xi}\,=:\,V_N(x,t;\xi),
\label{dpsivn}
\end{equation} 
and the derivative $V_N(x,t;\xi)$ is  a unique solution in $\Omega_*$ to  
another Cauchy problem
\begin{equation}
LV_N\,=\,0, \ \ \ 
V_N(0,t;\xi)\,=\,0, \ \ \ 
V_N(x,\xi;\xi)\,=\,-iL\psi^{(N)}. 
\label{cauchy2}
\end{equation}
Again by Theorem 2.1 of \cite{BS2}, as defined by \eqref{cauchy2}, 
$V_N \in C^\infty(\overline{\Omega_*})$ and satisfies \eqref{UnifDec} for any $t_*<A_1<A_2<+\infty$.

The relation \eqref{dpsivn} between the solutions of 
\eqref{psiNeq}--\eqref{psiNCP} and \eqref{cauchy2} 
 is proved in Lemma 3.7 of \cite{BS2}.\footnote{Formally, 
\eqref{dpsivn}--\eqref{cauchy2} can be shown  
by differentiating the Cauchy data in \eqref{psiNCP} with respect to $\xi$: 
$\psi^{[N]}_t(x,t;\xi)+\psi^{[N]}_\xi(x,t;\xi)=\psi^{(N)}_\xi(x,\xi)$ when $t=\xi$, and then as $\psi^{[N]}$ solves 
\eqref{psiNeq}, 
\[
\psi^{[N]}_t(x,t;\xi)=\frac{i}{2}\psi^{[N]}_{xx}(x,t;\xi)+ixt\psi^{[N]}(x,t;\xi)= 
\frac{i}{2}\psi^{(N)}_{xx}(x,\xi)+ix\xi\psi^{(N)}(x,\xi), \ \ t=\xi 
\]
(having in the last equality used \eqref{psiNCP} again), which leads to \eqref{cauchy2}. 
This derivation has however to be adjusted as the differentiability of $\psi^{[N]}$ with respect to the parameter $\xi$ is not known 
in advance, see proof of Lemma 3.7 of \cite{BS2} for the details. 
}
 Therefore, for any 
$t_*<T_1<T_2<+\infty$ and for any $(x,t)\in\Omega_*$, 
\begin{equation}
\psi^{[N]}(x,t; T_2)\,-\,\psi^{[N]}(x,t; T_1)\,=\,\int_{T_1}^{T_2}V_N(x,t;\xi)\,d\xi.  
\label{ftc}
\end{equation}
Further following \cite{BS2}, given a function $\psi(x,t)\in C^\infty(\overline\Omega)$ obeying \eqref{UnifDec}, for any non-negative integers 
$\alpha$, $\beta$ and $\gamma$, we introduce semi-norms 
\[
\|\psi\|_{\alpha\beta\gamma}(t)\,:=\,\left(\int_0^{+\infty} 
x^{2\alpha}\left\vert \partial_t^\beta\partial_x^\gamma \psi(x,t)\right\vert^2dx
\right)^{1/2}. 
\]
The technical argument in the proof of Lemma 3.7 of \cite{BS2} 
ensures that  
\eqref{dpsivn} is infinitely differentiable in $x$ and $t$  with 
$\partial^\beta_t\partial^\gamma_xV_N(x,t;\xi)$ continuous in $\xi$ for any $\beta$ and $\gamma$. 
As a result, the identity \eqref{ftc} is also infinitely differentiable in $x$ and $t$, and 
yields Cook's type estimates, cf. e.g. \cite{Kato}
\begin{equation}
\left\|\psi^{[N]}(x,t; T_2)\,-\,\psi^{[N]}(x,t; T_1)\right\|_{\alpha\beta\gamma}(t)\,\leq\,
\int_{T_1}^{T_2}\left\|V_N(x,t;\xi)\right\|_{\alpha\beta\gamma}(t)\,d\xi.  
\label{cook}
\end{equation}

3. As $\psi^{(N)}$ is an asymptotic solution to \eqref{PopEq} as $t\to+\infty$, it directly follows 
from its construction, see \eqref{SLansatz}, \eqref{QCtransf}--\eqref{tayl2}, that for $t>t_*$ 
\begin{equation}
\left\|L\psi^{(N)}\right\|_{\alpha\beta\gamma}(t)\,\leq\,C(\alpha,\beta,\gamma,N)\,
t^{-2+3\alpha+4\beta+2\gamma-N}
\label{psiNest1}
\end{equation} 
with $t$-independent constants $C(\alpha,\beta,\gamma,N)$. 

Next, as problem \eqref{cauchy2} has initial data satisfying estimates \eqref{psiNest1} 
at $t=\xi$, %inequality (3.1) of 
\cite{BS2} 
implies similar estimate for $V_N$, cf Lemma 3.8 of \cite{BS2}. 
Namely, generally for a function $\psi(x,t)$ solving \eqref{PopEq}--\eqref{PopBC} and satisfying 
\eqref{UnifDec}, according to inequality (3.1) of \cite{BS2}, for any $t_1,t_2\in\mathbb{R}$, 
\begin{equation}
\|\psi\|_{\alpha\beta\gamma}(t_1)\,\leq\,\sum_{|J'|\leq|J|, \beta'=0}C_{JJ'} 
\left(
|t_1|^{|J|-|J'|}+|t_2|^{|J|-|J'|}+1
\right)\|\psi\|_{\alpha'0\gamma'}(t_2). 
\label{3.1BS2}
\end{equation}
Here for multiindices $J=(\alpha,\beta,\gamma)$ and $J'=(\alpha',\beta',\gamma)$, we denote 
$|J|:=3\alpha+4\beta+2\gamma$ and $|J'|:=3\alpha'+4\beta'+2\gamma'$. The constants 
$C_{JJ'}$ are independent of $t_1$ and $t_2$. 

Applying \eqref{3.1BS2} to $\psi(x,t)=V_N(x,t;\xi)$ with $t_1=t$ and $t_2=\xi$, 
for $t_*<A\leq t\leq B <+\infty$ and $\xi>t_*$, we obtain via \eqref{cauchy2} and \eqref{psiNest1} 
\begin{equation}
\left\|V_N(x,t;\xi)\right\|_{\alpha\beta\gamma}(t)\,\leq\,C(\alpha,\beta,\gamma,A,B,N)\,
\xi^{-2+3\alpha+4\beta+2\gamma-N}
\label{VNest1}
\end{equation} 
with constant $C(\alpha,\beta,\gamma,A,B,N)$ independent from $\xi$ and $t$. 

%For multiindex $J=(\alpha,\beta,\gamma)$ denote $|J|:=3\alpha+4\beta+2\gamma$. 
Given $N$ and  $1\leq t_*<A<B<+\infty$, \eqref{cook} and \eqref{VNest1} imply that, 
for all $\alpha$, $\beta$ and $\gamma$ with  $|J|\leq N$,
 $A\leq t\leq B$, and $t_*<T_1<T_2$, 
\[
\left\|\psi^{[N]}(x,t; T_2)-\psi^{[N]}(x,t; T_1)\right\|_{\alpha\beta\gamma}(t)\,\leq\,
C(\alpha,\beta,\gamma,A,B,N)
\int_{T_1}^{T_2}\xi^{-2}d\xi\,\longrightarrow\, 0, 
\]
as $T_1,T_2\to +\infty$.  
Therefore, as $A$ and $B$ such as $t_*<A<B<+\infty$ are arbitrary, there exists a function $\psi^{[N]}(x,t)$ such that 
$x^\alpha\partial^\beta_t\partial^\gamma_x\psi^{[N]}\in L^2(\mathbb{R}^+)$ 
for all $\alpha$, $\beta$ and $\gamma$ with 
$|J|\leq N$ and for all $t>t_*$, and 
\begin{equation}
\left\|\psi^{[N]}(x,t; T)\,-\,\psi^{[N]}(x,t)\right\|_{\alpha\beta\gamma}(t)\,\,
\rightarrow\,0 \ \ \mbox{as }\,T\, \to +\infty.  
\label{psiTest}
\end{equation} 

4. The rest of the proof follows proof of Lemma 3.9 of \cite{BS2} and shows that for large enough 
$N$ the above constructed $\psi^{[N]}(x,t)$ does not actually depend on $N$ and is the sought unique 
smooth solution $\psi(x,t)$. 

First, \eqref{psiTest} implies via the embedding theorems that, for large enough $N$, $\psi^{[N]}(x,t)$ is a classical solution 
of \eqref{PopEq} satisfying boundary condition \eqref{PopBC}. Further, since a solution to \eqref{PopEq}--\eqref{PopBC} has 
a $t$-independent $L^2(\mathbb{R}^+)$ norm, the uniqueness would immediately follow from \eqref{psi0lim}. So all what 
remains to show is that, for large enough $N$, $\psi^{[N]}(x,t)$ obeys \eqref{psi0lim}, i.e. 
(with $\|\cdot\|$ denoting the $L^2(\mathbb{R}^+)$ norm) 
\begin{equation}
\left\| \psi^{[N]}(x,t)\,-\,\psi^+_0(x,t)\right\|(t)\,\rightarrow\,0, \ \ \ 
\mbox{as }\,t\to +\infty. 
\label{keyconv}
\end{equation} 
We have, for all $t,T>t_*$,  
\begin{eqnarray}
\left\| \psi^{[N]}(x,t)\,-\,\psi^+_0(x,t)\right\|\,\,\leq\,\, 
\left\| \psi^{[N]}(x,t)-\psi^{[N]}(x,t;T)\right\|\,\,+ \ \ \ \ \ \ \ \ \ \ \ \ \ \ \ \ \ \ \ \ \ 
\nonumber \\
\, \ \ \ \ \ \ \ \ \ \ \ \ \ \ \ \ \ \ \ \ \ 
\left\| \psi^{[N]}(x,t;T)-\psi^{(N)}(x,t)\right\|\,+\,
\left\| \psi^{(N)}(x,t)-\psi^+_0(x,t)\right\|. 
\label{psiNlim}
\end{eqnarray}
Keeping first $t>t_*$ fixed and passing to the limit as $T\to+\infty$, 
the first term on the right hand side vanishes due to \eqref{psiTest}. 
Next, as the second term vanishes for $t=T$ by \eqref{psiNCP} and $L\psi^{[N]}=0$, due to a 
general estimate $\left\vert \frac{d}{dt}\|\varphi\|\,\right\vert\leq \|L\varphi\|$, 
for $T>t$, 
\[
\left\| \psi^{[N]}(x,t;T)-\psi^{(N)}(x,t)\right\|\,\leq\,
\left\vert\int_t^T\left\|L\psi^{(N)}(x,t')\right\|dt'\right\vert\,\,\,\, \leq\, 
\ \ \ \ \ \ \ \ \ \ \ \ \ \ \ \ \ \ \ \ \ 
\]
\begin{equation}
\ \ \ \ \ \ \ \ \ \ \ \ \ \ \ \ \ \ \ \ \ 
\int_t^{+\infty}\left\|L\psi^{(N)}(x,t')\right\|dt'\,\,\leq\,\,
C_N t^{-1-N} 
\label{2ndterm}
\end{equation} 
with some $t$-independent constant $C_N$. (The last inequality has used \eqref{psiNest1} 
with $\alpha=\beta=\gamma=0$.)  
Passing finally to the limit as $t\to+\infty$, the second term vanishes by \eqref{2ndterm}. 
Finally, the third term on the right hand side of \eqref{psiNlim} vanishes as $t\to+\infty$ as 
the $L^2$-norm of every ``higher-order'' term in \eqref{SLansatz2}, i.e. for $m\geq 1$, vanishes in the limit. 

This proves \eqref{keyconv}, and for completing the proof of the lemma it remains to observe that by the uniqueness 
$\psi(x,t):=\psi^{[N]}(x,t)$ is, for large enough $N$, independent of $N$. 
Therefore, taking $N$ large enough, 
$x^\alpha\partial^\beta_t\partial^\gamma_x\psi=x^\alpha\partial^\beta_t\partial^\gamma_x\psi^{[N]}\in L^2(\mathbb{R}^+)$ 
for all non-negative 
$\alpha$, $\beta$ and $\gamma$, and so $\psi(x,t)\in C^\infty(\overline{\Omega_*})$ with \eqref{UnifDec} held. 
As a solution to \eqref{PopEq}, \eqref{PopBC}, by e.g. Theorem 2.1 of \cite{BS2}, it can be uniquely continued for 
all $t\in\mathbb{R}$ with all the required properties, so the lemma is proved. 
\end{proof}

Lemma \ref{lem3} allows to complete the proof of Theorem \ref{thm1} by implying that the $G_0$ constructed in Lemma \ref{lem2} 
does not actually depend on the choice of the subsequence $\tau_n$ and the stated convergences therefore hold for the whole 
of $\tau\to+ 0$, as follows. 

\begin{lemma}\label{lem4}
%(i) 
Let $\psi(x,t)\in C^\infty(\overline{\Omega})$ be a solution to \eqref{PopEq}, satisfying the boundary condition \eqref{PopBC} 
and the decay condition \eqref{UnifDec}, and let function $G(\eta,\tau)$ be associated with it via 
the transformation \eqref{SLtransform}, \eqref{QCtao}. 

Then the convergences in \eqref{gconv1} and \eqref{gconv2} are held for the whole of $\tau\to+ 0$, and such $G_0$ is therefore 
unique. 

%(ii) 
For this $G_0$ assertions \eqref{psi0lim} and \eqref{gconv} are valid, and therefore Theorem \ref{thm1} holds. 
\end{lemma}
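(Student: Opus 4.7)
The plan is to establish a conservation law for a bilinear pairing of two solutions of \eqref{QCtao2}--\eqref{BC2}, and then to exploit it, via the rich family of solutions supplied by Lemma \ref{lem3}, to pin down the weak limit $G_0$ uniquely. Concretely, for any two smooth solutions $G_1, G_2$ of \eqref{QCtao2}--\eqref{BC2} obeying \eqref{UnifDec2}, a direct differentiation using $G_{j,\tau} = -\frac{i}{2}G_{j,\eta\eta}$, the Leibniz rule for the moving lower endpoint, and integration by parts should show that
\[
\int_{-\frac{1}{6}\tau^{-2}}^{+\infty} G_1(\eta,\tau)\,\overline{G_2(\eta,\tau)}\,d\eta
\]
is independent of $\tau \in (0,+\infty)$: the boundary contribution at the moving endpoint vanishes because $G_1 = G_2 = 0$ on $l$; the bulk integrand reduces to a total $\eta$-derivative, whose value at $+\infty$ vanishes by \eqref{UnifDec2} and at the lower endpoint vanishes again by \eqref{BC2}.

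Next, I would fix an arbitrary $\phi \in C_0^\infty(\mathbb{R})$ and invoke Lemma \ref{lem3} to produce a solution $G_\phi$ of \eqref{QCtao2}--\eqref{BC2} with $\|G_\phi(\cdot,\tau) - \phi\|_{L^2(\mathbb{R})} \to 0$ as $\tau \to 0+$. Extending both $G$ and $G_\phi$ by zero below $l$ and applying the conservation law to $G_1 = G$ and $G_2 = G_\phi$ gives $\langle G(\cdot,\tau), G_\phi(\cdot,\tau)\rangle_{L^2(\mathbb{R})} = c(\phi)$ for all $\tau > 0$, with $c(\phi)$ depending only on $G$ and $\phi$. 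Taking $\tau = \tau_n$ along the subsequence from Lemma \ref{lem2}, the weak $L^2(\mathbb{R})$ convergence $G(\cdot,\tau_n) \rightharpoonup G_0$ pairs with the strong $L^2(\mathbb{R})$ convergence $G_\phi(\cdot,\tau_n) \to \phi$ to yield $\langle G_0, \phi\rangle_{L^2(\mathbb{R})} = c(\phi)$. Since $c(\phi)$ is subsequence-independent and $C_0^\infty(\mathbb{R})$ is dense in $L^2(\mathbb{R})$, $G_0$ is uniquely determined.

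With $G_0$ unique, a subsequence-of-subsequence argument promotes each convergence in Lemma \ref{lem2} from ``along $\tau_n$'' to the full limit $\tau \to 0+$, giving weak $H^1(\mathbb{R})$ convergence, weak $L^2(\mathbb{R})$ convergence of $\eta G$, and strong $L^2(\mathbb{R})$ convergence of $G(\cdot,\tau)$ to $G_0$; the strong $L^2(\mathbb{R})$ part also follows directly from weak convergence together with the norm equality $\|G(\cdot,\tau)\|_{L^2(\mathbb{R})} \equiv C_0 = \|G_0\|_{L^2(\mathbb{R})}$ supplied by Lemma \ref{lem1}(i) and Lemma \ref{lem2}(ii). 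Finally, the identity \eqref{g0l2lim2} from the proof of Lemma \ref{lem3} translates strong $L^2(\mathbb{R})$ convergence of $G(\cdot,\tau)$ to $G_0$ into $\|\psi(\cdot,t) - \psi_0^+(\cdot,t)\|_{L^2(\mathbb{R}^+)} \to 0$ as $t \to +\infty$, yielding \eqref{psi0lim} and \eqref{gconv} and completing Theorem \ref{thm1}. The main subtlety is not any single step in isolation but the orchestration: the whole argument rests on Lemma \ref{lem3} supplying a sufficiently rich class of test solutions $G_\phi$ with prescribed small-$\tau$ limits, and on the fact that the Dirichlet condition on the moving curve $l$ is compatible with conservation of the bilinear pairing; once these are in place, the weak-strong pairing and subsequence arguments are standard.
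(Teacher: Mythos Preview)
Your argument is correct, and it is organized differently from the paper's, although both rest on the same ingredients (Lemma~\ref{lem2}, Lemma~\ref{lem3}, and conservation for solutions of \eqref{QCtao2}--\eqref{BC2}). The paper works in the $\psi$-variables: it approximates the limit $G_0$ from Lemma~\ref{lem2} by $G_0^\varepsilon\in C_0^\infty(\mathbb{R})$, applies Lemma~\ref{lem3} to produce $\psi^\varepsilon$ with prescribed searchlight limit $G_0^\varepsilon$, and then uses the triangle inequality
\[
\|\psi-\psi_0^+\|\ \leq\ \|\psi-\psi^\varepsilon\|+\|\psi^\varepsilon-\psi_0^\varepsilon\|+\|\psi_0^\varepsilon-\psi_0^+\|,
\]
the key point being that $\|\psi-\psi^\varepsilon\|$ is $t$-independent and can be evaluated as $<\varepsilon$ along the subsequence $t_n$. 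You instead polarize this norm conservation to the bilinear form $\langle G,G_\phi\rangle$, which lets you pin down $\langle G_0,\phi\rangle$ for every $\phi\in C_0^\infty(\mathbb{R})$ directly, and then upgrade via a sub-subsequence argument. Your route is arguably cleaner in that uniqueness of $G_0$ comes first and all convergences follow; the paper's route establishes \eqref{psi0lim} first and reads off uniqueness afterwards. One small point: the identity \eqref{g0l2lim2} as written in the paper uses compact support of $G_0$, whereas your $G_0$ need not be compactly supported; but the missing tail term $\int_{-\infty}^{-\tau^{-2}/6}|G_0|^2\,d\eta$ tends to zero since $G_0\in L^2(\mathbb{R})$, so this is harmless.
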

\begin{proof}
%{\it (i):} 
Let $\psi(x,t)$ be a solution to \eqref{PopEq} satisfying \eqref{PopBC} 
and \eqref{UnifDec}, and let $G(\eta,\tau)$ be associated with it via \eqref{SLtransform}, \eqref{QCtao}. 
Then $G(\eta,\tau)$ satisfies assumptions of Lemma \ref{lem1} and therefore of Lemma \ref{lem2}. 
Applying to $G(\eta,\tau)$ Lemma \ref{lem2}, let $\tau_n\to+0$ be a related subsequence, and let $G_0\in H^1(\mathbb{R})$ 
with $\eta G_0(\eta)\in L^2(\mathbb{R})$ be the associated limit, see \eqref{gconv1} and \eqref{gconv2}. 
Using the transformation \eqref{SLtransform}, \eqref{QCtao} in reverse, we conclude that for $t_n=\tau_n^{-1}\to+\infty$ as 
$n\to\infty$, 
\begin{equation}
\left\| \psi(x,t_n)\,-\,\psi^+_0(x,t_n)\right\|_{L^2(\mathbb{R}^+)}(t_n)\,\rightarrow\,0, \ \ \ 
\mbox{as }\,n\to +\infty, 
\label{psi0limn}
\end{equation} 
where $\psi_0^+$ is given by \eqref{psi0}. 
This means that \eqref{psi0lim} is held for the subsequence 
$t_n\to+\infty$, and we need to show that it actually holds for the whole of $t\to+\infty$. 

Since $C_0^\infty(\mathbb{R})$ is dense in $L^2(\mathbb{R})$, for any $\varepsilon>0$ choose 
$G_0^\varepsilon\in C_0^\infty(\mathbb{R})$ such that 
$\|G_0^\varepsilon-G_0\|_{L^2(\mathbb R)}<\varepsilon$. 
Applying for every such $G_0^\varepsilon$ Lemma \ref{lem3}, let $\psi^\varepsilon(x,t)\in C^\infty(\overline{\Omega})$ be 
associated solution to \eqref{PopEq}, satisfying \eqref{PopBC} and \eqref{UnifDec}, so that \eqref{psi0lim} and 
\eqref{psi0} read
\begin{equation}
\left\| \psi^\varepsilon(x,t)\,-\,\psi^\varepsilon_0(x,t)\right\|_{L^2(\mathbb{R}^+)}(t)\,\rightarrow\,0, \ \ \ 
\mbox{as }\,t\to +\infty, 
\label{psi0limeps}
\end{equation} 
where 
\begin{equation}
\psi^\varepsilon_0(x,t)\,=\,t^{-1/2}\exp\left\{
i\frac{7}{120}t^5+\frac{i}{2}\eta t^3+\frac{i}{2}\eta^2 t
\right\}\, G^\varepsilon_0(\eta), \ \ \ \ \eta:=\,\frac{x}{t}-\frac{1}{6}t^2. 
\label{psi0eps}
\end{equation} 
Now (with $\|\cdot\|$ denoting the $L^2(\mathbb{R}^+)$ norm)  
\begin{eqnarray}
\left\| \psi(x,t)\,-\,\psi^+_0(x,t)\right\|\,\,\leq\,\, 
\left\| \psi(x,t)-\psi^\varepsilon(x,t)\right\|\,\,\,\,+ \ \ \ \ \ \ \ \ \ \  
\nonumber \\
\, \ \ \ \ \ \ \ \ \ \ \ \ \ \ \ \ \ \ \ \ \ 
\left\| \psi^\varepsilon(x,t)-\psi^\varepsilon_0(x,t)\right\|\,\,+\,\,
\left\| \psi^\varepsilon_0(x,t)-\psi^+_0(x,t)\right\|. 
\label{psilimeps}
\end{eqnarray}
For the first term on the right hand side of \eqref{psilimeps}, notice that 
$\tilde\psi^\varepsilon(x,t):=\psi(x,t)-\psi^\varepsilon(x,t)$ is a $C^\infty(\overline{\Omega})$-solution of 
\eqref{PopEq} satisfying \eqref{PopBC} and \eqref{UnifDec}, as so are both $\psi(x,t)$ and $\psi^\varepsilon(x,t)$. 
Hence $\tilde\psi^\varepsilon(x,t)$ has a $t$-independent $L^2(\mathbb{R}^+)$-norm, 
$\|\tilde\psi^\varepsilon(x,t)\|\equiv c_\varepsilon$, $\forall t$. On the other hand, taking 
$t=t_n$ and passing to the limit as $n\to+\infty$, via \eqref{psi0limn} and \eqref{psi0limeps}, 
\begin{eqnarray}
\left\| \psi(x,t)-\psi^\varepsilon(x,t)\right\|\,\equiv\,
c_\varepsilon\,=\,\lim_{n\to +\infty}\left\| \psi(x,t_n)-\psi^\varepsilon(x,t_n)\right\|\,\,= 
\nonumber \\ 
\, \ \ \ \ \ \ \ \ \ \ \ \ \ \ \ \ \ \ \ \ \ 
\lim_{n\to +\infty}\left\| \psi_0^+(x,t_n)-\psi_0^\varepsilon(x,t_n)\right\|\,=\,
\left\| G_0-G_0^\varepsilon \right\|_{L^2(\mathbb{R})}\,\,<\,\varepsilon. 
\label{l4t1}
\end{eqnarray}
The last limit in \eqref{l4t1} is evaluated similarly to that in \eqref{g0l2lim}, with $\psi_0^+(x,t)$ replaced by 
$\psi_0^+(x,t)-\psi_0^\varepsilon(x,t)$ and $G_0$ by $ G_0-G_0^\varepsilon$. 
This straightforward modification of \eqref{g0l2lim} also immediately implies that for the third term on the right hand 
side of \eqref{psilimeps}, 
\begin{equation}
\left\| \psi^\varepsilon_0(x,t)-\psi^+_0(x,t)\right\|\,\leq\, 
\left\| G_0-G_0^\varepsilon \right\|_{L^2(\mathbb{R})}\,=\,c_\varepsilon\,<\,\varepsilon. 
\label{l4t3}
\end{equation} 
Hence, employing in \eqref{psilimeps} estimates \eqref{l4t1} and  \eqref{l4t3}, 
\begin{equation}
\left\| \psi(x,t)\,-\,\psi^+_0(x,t)\right\|\,\,\leq\,\, 2\varepsilon\,+\,
\left\| \psi^\varepsilon(x,t)-\psi^\varepsilon_0(x,t)\right\|, \ \ \ \forall t>0, \,\forall \varepsilon>0. 
\label{l4t2}
\end{equation} 
Fixing first in \eqref{l4t2} $\varepsilon>0$, and taking the limit-supremum as $t\to+\infty$ using \eqref{psi0limeps} 
yields
\[
\limsup_{t\to+\infty}\left\| \psi(x,t)\,-\,\psi^+_0(x,t)\right\|\,\leq\,\,2\varepsilon, \ \ \forall  \, \varepsilon>0.
\]
Finally, \eqref{psi0lim} follows as $\varepsilon>0$ is arbitrary.  

To complete the proof of the lemma (and with it of Theorem \ref{thm1}) it remains to notice that via the 
transformation \eqref{SLtransform}, \eqref{QCtao}, \eqref{psi0lim} immediately implies that \eqref{gconv2} does hold for 
the whole of $\tau\to+ 0$. 
Hence, by the uniqueness of the limits, for any other subsequence $\tilde\tau_n\to+ 0$ the application of Lemma \ref{lem2} must 
produce the same limit $G_0$ and hence \eqref{gconv1} is also held for the whole of $\tau\to+ 0$, resulting in 
\eqref{gconv}. 
\end{proof}

\section{Concluding remarks} 
We have proved that the solution $\psi(x,t)$ to the inner scattering problem \eqref{PopEq}--\eqref{PopAC} has a searchlight 
asymptotics as $t\to+\infty$ in the form \eqref{psi0} with convergences \eqref{psi0lim} and \eqref{gconv} held. 
The searchlight amplitude $G_0(\eta)$ has certain smoothness and decay properties, as also proved in Theorem \ref{thm1}. 
The resulting analogue of a scattering operator is a well-defined unitary operator, Corollary \ref{cor1}, and provides a full information of how the intensities of the incoming whispering gallery modes transform into the outgoing searchlight amplitudes. 
Still, the searchlight asymptotics as $t\to +\infty$ is non-uniform in the sense that an additional effort would be needed for 
understanding how exactly the solution behaves for large positive $t$ near the boundary $x=0$. 
This would require a separate investigation.

%\section{Proof of Theorem \ref{thm2}}\label{proofthm2}

\end{document}